\definecolor{mypink}{rgb}{0.9, 0.0, 0.4}
\definecolor{mygreen}{rgb}{0.0,0.5,0.0}
\definecolor{mypurple}{rgb}{0.6,0.0,0.6}
\definecolor{myfuchsia}{rgb}{0.9,0.0,0.9}
\definecolor{mybrown}{rgb}{0.7,0.3,0.3}
\definecolor{mygray}{rgb}{0.6,0.6,0.6}
\definecolor{myltgray}{rgb}{0.85,0.85,0.85}
\definecolor{stnavy}{HTML}{0D1CC9}
\theoremstyle{definition}
\newtheorem{problem}{Problem}
\newcommand{\indeg}{\mathrm{indeg}}
\newcommand{\outdeg}{\mathrm{outdeg}}
\newcommand{\head}{\mathrm{head}}
\newcommand{\tail}{\mathrm{tail}}
\title{Which Phylogenetic Networks are Level-$k$ Networks with Additional Arcs? Structure and Algorithms} 
\titlerunning{Which Phylogenetic Networks are Level-$k$ Networks with Additional Arcs?} 
\author{Takatora Suzuki}{Department of Pure and Applied Mathematics, Graduate School of Fundamental Science and Engineering, Waseda University, Japan}{takatora.szk@fuji.waseda.jp}{https://orcid.org/0009-0007-3425-7006}{JST SPRING Program Grant Number JPMJSP2128, Japan}
\author{Momoko Hayamizu\footnote{Corresponding author}}{Department of Applied Mathematics, Faculty of Science and Engineering, Waseda University, Japan}{hayamizu@waseda.jp}{https://orcid.org/0000-0001-8825-6331}{JST FOREST Program Grant Number
JPMJFR2135, Japan}
\authorrunning{Takatora Suzuki and Momoko Hayamizu} 
\keywords{Phylogenetic networks, Support networks, Level-$k$ networks, Tier-$k$ networks, Structure theorem, Enumeration, Optimization} 
\begin{document}

\maketitle


\begin{abstract}
Reticulate evolution gives rise to complex phylogenetic networks, making their interpretation challenging. A typical approach is to extract trees within such networks. Since Francis and Steel's seminal paper, ``Which Phylogenetic Networks are Merely Trees with Additional Arcs?'' (2015), tree-based phylogenetic networks and their support trees (spanning trees with the same root and leaf set as a given network) have been extensively studied. However, not all phylogenetic networks are tree-based, and for the study of reticulate evolution, it is often more biologically relevant to identify support networks rather than trees. This study generalizes Hayamizu's structure theorem for rooted binary phylogenetic networks, which yielded optimal algorithms for various computational problems on support trees, to extend the theoretical framework for support trees to support networks. This allows us to obtain a direct-product characterization of each of three sets: all, minimal, and minimum support networks, for a given network. Each characterization yields optimal algorithms for counting and generating the support networks of each type. Applications include a linear-time algorithm for finding a support network with the fewest reticulations (i.e., the minimum tier). We also provide exact and heuristic algorithms for finding a support network with the minimum level, both running in exponential time but practical across a reasonably wide range of reticulation numbers.
\end{abstract}

\section{Introduction}
Evolutionary histories involving reticulate events, such as horizontal gene transfer, hybridization, and recombination, are better represented by networks than trees. However, networks can be far more complex than trees, making their interpretation challenging. A typical approach is to find meaningful subgraphs, particularly spanning trees, of these networks.

Since Francis and Steel's seminal paper ``Which Phylogenetic Networks are Merely Trees with Additional Arcs?'' \cite{FrancisSteel-2015-WhichPhylogeneticNetworks}, tree-based networks and their support trees (spanning trees with the same root and leaf set as the network) have been extensively studied (e.g., \cite{Hayamizu-2021-StructureTheoremRooted, HayamizuMakino-2023-RankingTopkTrees, Zhang-2016-TreeBasedPhylogeneticNetworks}). Hayamizu \cite{Hayamizu-2021-StructureTheoremRooted} developed optimal (linear-time or linear-delay)  algorithms for various computational problems, including counting, listing, and optimization of support trees, by establishing a theoretical foundation called a structure theorem for rooted binary phylogenetic networks. It provides a canonical way to decompose any rooted binary phylogenetic network into its unique maximal zig-zag trails, and characterizes the family of edge-sets of support trees of a given network. 
 
While tree-based phylogenetic networks encompass many well-studied subclasses such as tree-child, stack-free, and orchard networks, networks inferred from biological data are not necessarily tree-based. This has led to increased interest in non-tree-based networks (e.g., \cite{DavidovEtAl-2021-MaximumCoveringSubtrees, FischerFrancis-2020-HowTreebasedMy, FrancisEtAl-2018-NewCharacterisationsTreebaseda, SuzukiEtAl--BridgingDeviationIndices}). Indeed, for the study of reticulate evolution, it is often more biologically relevant to consider networks within networks rather than trees. In such contexts, finding a most concise subgraph, such as one with the fewest reticulations (minimum tier) or with the minimum level, is particularly meaningful. These metrics can be interpreted as measures of deviation from being tree-based \cite{FrancisEtAl-2018-NewCharacterisationsTreebaseda, FischerFrancis-2020-HowTreebasedMy}, as they equal zero for tree-based networks.

In this paper, we generalize Hayamizu's structure theorem to build a theoretical framework for support networks of rooted binary phylogenetic networks. Building on the maximal zig-zag trail decomposition established in \cite{Hayamizu-2021-StructureTheoremRooted}, we derive direct-product characterizations of three families of support networks for a given network $N$: all support networks $\mathcal{A}_N$, minimal support networks $\mathcal{B}_N$, and minimum support networks $\mathcal{C}_N$. These characterizations yield closed-form product formulas for their cardinalities, revealing unexpected connections to Fibonacci, Lucas, Padovan, and Perrin numbers. We present a linear-time algorithm for counting each of $|\mathcal{A}_N|$, $|\mathcal{B}_N|$, and $|\mathcal{C}_N|$, as well as a linear-delay algorithm to list the support networks in each family.

Our theoretical results lead to practical algorithms for two key optimization problems: Problem \ref{prob:minimum reticulation} (\textsc{Reticulation Minimization}) and Problem \ref{prob:level opt} (\textsc{Level Minimization}). Problem \ref{prob:minimum reticulation} asks for a support network with the minimum reticulation number (minimum tier). We present a linear-time optimal algorithm for solving Problem \ref{prob:minimum reticulation}, which can be achieved by selecting any element from $\mathcal{C}_N$. Problem \ref{prob:level opt} asks for a support network with the minimum level. We conjecture that this problem is NP-hard and present exact and heuristic  algorithms for solving Problem \ref{prob:level opt} (Algorithms \ref{alg:ExactAlgorithm} and \ref{alg:HeuristicAlgorithm}). Although they are both exponential-time algorithms, they are practical for networks with a reasonably wide range of reticulation numbers. The heuristic is particularly scalable and accurate in most cases.

The remainder of the paper is organized as follows. In Section \ref{sec:preliminaries.def}, we define graph theoretical terminology and review the relevant materials on support trees (Section \ref{subsec:support.trees}) and the structure theorem (Section \ref{sec:preliminaries1}). In Section \ref{sec:Problem}, we  define support networks, their families $\mathcal{A}_N$, $\mathcal{B}_N$, and $\mathcal{C}_N$, and relevant concepts. Section \ref{sec:count.main} provides characterizations and counting formulas for each family. Numerical results are also presented in Section \ref{sec:experiment.case.study}. In Section \ref{sec:minimum.reticulation}, we give a linear-time algorithm for solving Problem \ref{prob:minimum reticulation}. Section \ref{sec:minimum.level} presents the exact and heuristic  algorithms for  Problem \ref{prob:level opt}, along with performance evaluations using synthetic data. Section \ref{sec:conclusion} concludes with a summary of our contributions and directions for future research.

\section{Preliminaries}\label{sec:preliminaries.def}
The graphs in this paper are finite, simple (i.e. having neither loops nor multiple edges), acyclic directed  graphs, unless otherwise stated. 
For a graph $G$, $V(G)$ and $E(G)$ denote the sets of vertices and edges of $G$, respectively. 
For two graphs $G$ and $H$, $G$ is a \emph{subgraph} of $H$ if both $V(G) \subseteq V(H)$ and $E(G) \subseteq E(H)$ hold, in which case we write $G \subseteq H$. 
In particular, $G$ and $H$ are \emph{isomorphic}, denoted by $G=H$, if there exists a bijection between $V(G)$ and $V(H)$ that yields a bijection between $E(G)$ and $E(H)$. 
For a graph $G$, a \emph{block} of $G$ is a maximal connected subgraph that contains no cut edge.
A subgraph $G$ of $H$ is \emph{proper} if $G\neq H$. A subgraph  $G$ of $H$ is a \emph{spanning} subgraph of $H$ if $V(G)=V(H)$.
Given a graph $G$ and a non-empty subset $S \subseteq E(G)$, the edge-set $S$ is said to \emph{induce the subgraph  $G[S]$ of $G$}, that is, the one whose edge-set is $S$ and whose vertex-set is the set of the ends of all edges in $S$. 
For a graph $G$ with $|E(G)| \geq 1$ and a partition  $\{E_1,\dots,E_{d}\}$ of $E(G)$, the collection  $\{G[E_1], \dots, G[E_{d}]\}$ is a  \emph{decomposition} of $G$, and $G$ is said to be \emph{decomposed into} $G[E_1], \dots, G[E_{d}]$. 

For an edge $e=(u,v)$ of a graph $G$, $u$ and $v$ 
are denoted by $\it tail(e)$ and $\it head(e)$, respectively.
For a vertex $v$ of a graph $G$,  the \emph{in-degree of $v$ in $G$}, denoted by $\indeg_G(v)$, is the cardinality of  $\{e\in E(G)\mid \it head(e)=v\}$. The \emph{out-degree of $v$ in $G$}, denoted by $\outdeg_G(v)$,  is defined in a similar way.  For any graph $G$, a vertex $v\in V(G)$ with $\outdeg_G(v)=0$ is called a \emph{leaf} of $G$. \emph{Subdividing} an edge $(u,v)$ means replacing it with a directed path from $u$ to $v$ of length at least one. \emph{Smoothing} a vertex $v$ where $\indeg_G(v)=\outdeg_G(v)=1$ means suppressing $v$ from $G$, namely, the reverse operation of edge subdivision.

\subsection{Phylogenetic networks and support trees}\label{subsec:support.trees}
Suppose $X$ represents a non-empty finite set of present-day species. A \emph{rooted binary phylogenetic network (on a leaf set $X$)} is defined to be a finite simple directed acyclic graph $N$ with the following three properties:
\begin{description}
  \item[(P1)] There exists a unique vertex $\rho$ of $N$ with $\indeg_N(\rho)=0$ and $\outdeg_N(\rho)\in \{1,2\}$; 
  \item[(P2)] The set of leaves of $N$ is identical to $X$; 
  \item[(P3)] For any $v\in V(N)\setminus (X\cup \{\rho\})$, $\indeg_N(v) \in \{1,2\}$ and $\outdeg_N(v) \in \{1,2\}$.
\end{description}

The vertex $\rho$ is called \emph{the root} of $N$, and any vertex  $v$ with $(\indeg_N(v), \outdeg_N(v))= (2,1)$ is called a \emph{reticulation} of $N$. 
When $N$ has no reticulation, $N$ is particularly called a rooted binary phylogenetic \emph{tree} on $X$.

\begin{definition}[\cite{FrancisSteel-2015-WhichPhylogeneticNetworks}]\label{def:support tree}
Let $N$ be a rooted binary phylogenetic network on $X$. If there exist  a rooted binary phylogenetic tree $\tilde{T}$ on $X$ and  a spanning tree $T$ of $N$  such that $\tilde{T}$ is obtained by smoothing all $v\in V(T)$ with $\indeg_T(v)=\outdeg_T(v)=1$, then $N$ is called a \emph{tree-based} network. In this case, $T$ is called a \emph{support tree} of $N$ and $\tilde{T}$ a \emph{base tree} of $N$. 

\end{definition} 

As a support tree $T$ is a spanning tree of $N=(V,E)$, each $T$ is specified by its edge-set. This leads to the natural question: Which subsets $S$ of $E$ yield a support tree $N[S]=(V, S)$ of $N$?
Francis and Steel~\cite{FrancisSteel-2015-WhichPhylogeneticNetworks} proved that such ``admissible'' subsets  $S$ of $E$ are characterized by the following conditions (C1)--(C3).  As in \cite{Hayamizu-2021-StructureTheoremRooted}, we slightly generalize the original definition in \cite{FrancisSteel-2015-WhichPhylogeneticNetworks} so that we can consider admissible edge selections for any subgraph of $N$.  

\begin{definition}\label{def:admissible}
 Let $N$ be a rooted binary phylogenetic network and let $G$ be any subgraph of $N$. A subset $S$ of $E(G)$ is \emph{admissible} if it satisfies the following conditions:
  \begin{description}
    \item [(C1)] If $(u, v)$ is an edge of $G$ with $\outdeg_N(u) = 1$ or $\indeg_N(v) = 1$, then $S$ contains $(u,v)$.
    \item [(C2)] If $e_1$ and $e_2$ are distinct edges of $G$ with  $\tail(e_1) = \tail(e_2)$, then $S$ contains at least one of $\{e_1, e_2\}$.
    \item [(C3)] If $e_1$ and $e_2$ are distinct edges of $G$ with $\head(e_1) = \head(e_2)$, then $S$ contains exactly one of $\{e_1, e_2\}$.
  \end{description}
\end{definition}

\begin{theorem}[\cite{FrancisSteel-2015-WhichPhylogeneticNetworks}]\label{thm:bijection}
   Let $N$ be a rooted binary phylogenetic network and let $S \subseteq E(N)$. Then, the subgraph $N[S]$ of $N$ induced by $S$ is a support tree of $N$ if and only if $S$ is an admissible subset of $E(N)$.  
   Moreover, there exists a one-to-one correspondence between the family of admissible subsets $S$ of $E(N)$ and the family $\mathcal{T}$ 
   of support trees of $N$. 
   \end{theorem}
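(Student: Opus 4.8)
The plan is to prove the biconditional by translating the definition of a support tree into purely local degree constraints on the induced subgraph, reading off (C1)--(C3) from those constraints, and then reversing the implication. First I would record the structural facts that any support tree $T$ of $N$ must satisfy. Since smoothing only suppresses vertices of in- and out-degree $1$, it preserves the root $\rho$ (whose in-degree is $0$, so it is never smoothed), the leaf set, connectivity, acyclicity, and the property that every non-root vertex has exactly one incoming edge. Because the base tree $\tilde{T}$ is by definition a rooted binary phylogenetic tree on $X$ rooted at $\rho$, these facts pull back to $T$ and give $\indeg_T(\rho)=0$, $\indeg_T(v)=1$ for every $v\neq\rho$, and that the leaves of $T$ are exactly $X$ (equivalently, $\outdeg_T(u)\geq 1$ for every $u\notin X$).

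For the forward direction I would derive (C1)--(C3) from these facts with $S=E(T)$. Condition (C3) is immediate: if two distinct edges share a head $v\neq\rho$, then $\indeg_T(v)=1$ forces exactly one of them into $S$. The in-degree clause of (C1) follows likewise, since a vertex with a unique incoming edge must use it. For the out-degree clause of (C1) and for (C2) I would use that a vertex $u$ that is the tail of an edge of $N$ is not a leaf, hence $u\notin X$, hence $\outdeg_T(u)\geq 1$: if $\outdeg_N(u)=1$ its unique out-edge is forced, and if $\outdeg_N(u)=2$ then at least one of its two out-edges must lie in $S$.

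For the converse I would assume $S$ admissible and compute the degrees of $N[S]$: conditions (C1) and (C3) give $\indeg_{N[S]}(v)=1$ for every $v\neq\rho$ and $\indeg_{N[S]}(\rho)=0$, while (C1) and (C2) give $\outdeg_{N[S]}(u)\geq 1$ for every non-leaf $u$. The in-degree pattern shows $N[S]$ spans $V(N)$ and has exactly $|V(N)|-1$ edges, with $\rho$ as its unique source; since $N[S]$ is a subgraph of the acyclic graph $N$, following the unique parent backward from any vertex must terminate at $\rho$, so $N[S]$ is connected and hence an arborescence rooted at $\rho$. The out-degree bound forces the leaves of $N[S]$ to be exactly $X$, and smoothing its in/out-degree-$1$ vertices leaves a tree whose root has out-degree in $\{1,2\}$, whose remaining internal vertices have out-degree $2$, and whose leaf set is $X$; this is precisely a rooted binary phylogenetic tree on $X$, so $N[S]$ is a support tree.

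Finally, for the one-to-one correspondence I would observe that $E(N[S])=S$ for every admissible $S$, while every support tree $T$, being a spanning subgraph of $N$, satisfies $T=N[E(T)]$ with $E(T)$ admissible; hence $S\mapsto N[S]$ and $T\mapsto E(T)$ are mutually inverse, giving the bijection with $\mathcal{T}$. I expect the main obstacle to be the converse direction: arguing rigorously that the three \emph{local} conditions (C1)--(C3) force $N[S]$ to be \emph{globally} a single spanning tree (both connectivity and the absence of extra sources) and that its smoothing is \emph{exactly} a binary phylogenetic tree on $X$ rather than a forest or a tree with illegal degrees. The edge-count together with the unique-source argument, leaning on acyclicity inherited from $N$, is the crux.
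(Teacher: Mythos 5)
The paper states Theorem~\ref{thm:bijection} as an imported result of Francis and Steel and gives no proof of its own, so there is nothing internal to compare against; judged on its own terms, your argument is correct and follows the same route as the original source: read (C1)--(C3) off the local degree constraints that a support tree must satisfy, and conversely use (C1)/(C3) to get in-degree exactly one at every non-root vertex, so that $N[S]$ spans $V(N)$, has $|V(N)|-1$ edges, and (by following unique parents backward in the acyclic graph) is an arborescence rooted at $\rho$, while (C1)/(C2) pin the leaf set to $X$. You correctly identify the converse's global connectivity as the only nontrivial step and dispose of it with the unique-source/acyclicity argument, and the mutual inverses $S\mapsto N[S]$, $T\mapsto E(T)$ give the stated correspondence (with the caveat, which the paper itself flags, that this is a bijection onto support trees regarded as edge-subsets of $N$, not onto isomorphism classes).
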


We note that two different admissible subsets $S_1, S_2 \subseteq E(N)$ can induce two isomorphic support trees $N[S_1]$ and $N[S_2]$ of $N$, and this raises subtly different counting problems on support trees (see Fig.\ 1 and Section 7.2 in \cite{Hayamizu-2021-StructureTheoremRooted}). Therefore, it may be more accurate to say that the family of admissible subsets $S$ of $E(N)$ corresponds to the family of edge-sets of support trees of $N$, rather than the set of such trees.  However, in this paper, based on the bijection described in Theorem \ref{thm:bijection}, we identify each support tree $T$ of $N$ with its edge-set $E(T)$  to discuss the number of admissible options for $E(T)$.

\subsection{The structure theorem for rooted binary phylogenetic networks}\label{sec:preliminaries1}
We recall the relevant materials from \cite{Hayamizu-2021-StructureTheoremRooted}. 
Given a rooted binary phylogenetic network $N$,  a connected subgraph $Z$ of  $N$ with $m:=|E(Z)|\geq 1$  is a  \emph{zig-zag trail} (\emph{in $N$}) if the edges of $Z$ can be permuted as $(e_1, \dots, e_m)$ such that either $\head(e_i)=\head(e_{i+1})$ or $\tail(e_i)=\tail(e_{i+1})$ holds for each $i\in [1,m-1]$. 
A zig-zag trail is represented by an alternating sequence of (not necessarily distinct) vertices and distinct edges, e.g., $(v_0, (v_0, v_1), v_1, (v_2, v_1), v_2, \dots, (v_m, v_{m-1}), v_m)$, which can more concisely written as $v_0 > v_1 < v_2 > \cdots > v_{m-1} < v_m$ or its reverse. 
	A zig-zag trail $Z$ in $N$ is  \emph{maximal} if $N$ contains no zig-zag trail $Z^\prime$  such that $Z$ is a proper subgraph of $Z^\prime$. Each maximal zig-zag trail in $N$ falls into one of the following four types. 
	A \emph{crown} is a maximal zig-zag trail $Z$ that has even $m:=|E(Z)|\geq 4$ and can be written in the cyclic form $v_0 < v_1 > v_2 < v_3 > \cdots  > v_{m-2} < v_{m-1} > v_{m} = v_0$.  A \emph{fence} is any maximal zig-zag trail that is not a crown. An \emph{N-fence} is a fence $Z$ with odd $m:=|E(Z)|\geq 1$,  expressed as $v_0 > v_1 < v_2 > \cdots < v_{m-1} > v_m$. A fence $Z$ with even $m:=|E(Z)|\geq 2$ is an \emph{M-fence} if expressed as $v_0 < v_1 > v_2 < \cdots < v_{m-1} > v_m$, and it is a \emph{W-fence} if expressed as $v_0 > v_1 < v_2 > \cdots > v_{m-1} < v_m$. 
	
		As in the approach taken in \cite{Hayamizu-2021-StructureTheoremRooted, HayamizuMakino-2023-RankingTopkTrees}, we often consider a maximal zig-zag trail $Z$ as a sequence $(e_1,\dots,e_{|E(Z)|})$ of edges, ordered according to their appearance in the trail. When $Z$ is a fence, the \emph{terminal edges} of $Z$ are the first and last edges in such a sequence. 
		This representation allows us to encode any edge-induced subgraph of $Z$ as a 0-1 sequence of length $|E(Z)|$. For example, for an $N$-fence $Z = (e_1, e_2, e_3, e_4, e_5)$ and a subset $\{e_1, e_3, e_5\} \subseteq E(Z)$, the subgraph of $Z$ induced by this edge-set can be written as $\langle1\; 0\; 1\;0\;1\rangle = \langle1(01)^2\rangle$.

We now restate the structure theorem for rooted binary phylogenetic networks, which provides a way to canonically decompose such networks $N$ (see Figure \ref{fig:zigzagtrail decomposition} for an illustraction) and characterizes the family of admissible subsets $S$ of $E(N)$, namely, the family of edge-sets of support trees of $N$. 

\begin{theorem}[\cite{Hayamizu-2021-StructureTheoremRooted}]\label{thm:structure}
For any rooted binary phylogenetic network $N$, the following hold: 
\begin{enumerate}
	\item The maximal zig-zag trail decomposition $\mathcal{Z}=\{Z_1,\dots, Z_d\}$  of  $N$ is unique to $N$. 
	\item Given $S \subseteq E(N)$, $N[S]$ is a support tree of $N$ if and only if for each $Z_i\in \mathcal{Z}$, $E(G)\cap E(Z_i)$ is an admissible subset of $E(Z_i)$. Here, the family $\mathcal S(Z_i)$ of admissible subsets of $E(Z_i)$ is given by \eqref{sequence}. 
	\item The family $\mathcal T$ of support trees of $N$ is non-empty (i.e., $N$ is tree-based) if and only if each $\mathcal{S}(Z_i)$ is non-empty (i.e., no element of $\mathcal{Z}$ is a W-fence). When $\mathcal T$ is non-empty, it is characterized by $\mathcal T=\prod_{i=1}^{d}{\mathcal S(Z_i)}$, where $(Z_1,\dots, Z_d)$ is an arbitrary ordering for $\mathcal{Z}$. 
\end{enumerate}
\begin{align}
  \label{sequence}
    \mathcal S (Z_i) = \begin{cases}
    	\emptyset & \text{if $Z_i$ is a W-fence}\\
        \{\langle 1(01)^{(|E(Z_i)|-1)/2}\rangle  \} & \text{if $Z_i$ is an N-fence}\\
        \{\langle (10)^{|E(Z_i)|/2}\rangle, \langle (01)^{|E(Z_i)|/2}\rangle \} & \text{if $Z_i$ is a crown}\\
        \{\langle 1(01)^p (10)^q 1\rangle \mid p, q\in \mathbb{Z}_{\geq 0}, p,+q = (|E(Z_i)|-2)/2 \} & \text{if $Z_i$ is an M-fence}
      \end{cases}
\end{align}
\end{theorem}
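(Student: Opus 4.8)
The plan is to establish the three parts in order, deriving everything from the local degree structure of $N$ together with conditions (C1)--(C3). For Part~1, I would pass to an auxiliary line-type graph $L$ whose vertices are the edges of $N$, joining $e,e'$ whenever $\head(e)=\head(e')$ or $\tail(e)=\tail(e')$. Since $N$ is binary, each vertex of $N$ is the common head of at most two edges and the common tail of at most two edges, so every $e$ has at most one head-neighbour and at most one tail-neighbour; hence $L$ has maximum degree $2$ and is a disjoint union of paths and cycles. I would then note that two consecutive joins along $L$ cannot both be head-joins (that would place three edges at one head, violating $\indeg_N\le 2$) nor both tail-joins, so the joins along any path or cycle automatically alternate between head- and tail-sharing, which is exactly the defining alternation of a zig-zag trail. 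Thus the connected components of $L$ are precisely the maximal zig-zag trails of $N$, and since the partition of $V(L)=E(N)$ into components is unique, so is the decomposition $\mathcal{Z}$; paths give fences and cycles give crowns, and the parity of $|E(Z_i)|$ together with the orientation of the two terminal edges sorts the fences into N-, M-, and W-types.

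For Part~2 the key structural point is that admissibility is local to the trails. Conditions (C2) and (C3) constrain only pairs of edges sharing a tail or a head, and any such pair lies in one component of $L$, i.e.\ in the same $Z_i$; condition (C1) constrains single edges through their $N$-degrees, and the generalised Definition~\ref{def:admissible} evaluates these degrees in $N$ rather than in the subgraph. Hence $S\subseteq E(N)$ satisfies (C1)--(C3) relative to $N$ if and only if, for every $Z_i$, the restriction $S\cap E(Z_i)$ satisfies (C1)--(C3) relative to $Z_i$. Combined with Theorem~\ref{thm:bijection}, this already yields the stated ``if and only if'' and reduces the problem to computing $\mathcal S(Z_i)$ for each type.

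Computing the $\mathcal S(Z_i)$ is the combinatorial heart of the argument. I would first observe that every terminal edge of a fence is forced into $S$ by (C1): its free end has no trail-neighbour, so the corresponding $N$-out-degree (if the free end is a tail) or $N$-in-degree (if it is a head) equals $1$. Encoding a subset as a $0$-$1$ string and reading the fence from one terminal edge to the other, the internal vertices yield an alternating sequence of (C3)-valleys (``exactly one'') and (C2)-peaks (``at least one''). For an N-fence the first constraint is a valley, so starting from the forced terminal $1$ the assignment propagates deterministically to $1\,0\,1\,0\cdots 1$ and is consistent with the other forced terminal, giving the unique string $\langle 1(01)^{(|E(Z_i)|-1)/2}\rangle$. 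For a W-fence both terminals are again forced in but the first and last constraints are valleys, so the deterministic chain from one end collides with the forced value at the other, leaving $\mathcal S(Z_i)=\emptyset$. For an M-fence both terminals are forced in while the first and last constraints are peaks, so a (C2)-peak with one endpoint already $1$ leaves the other free; this is the exact source of multiplicity, and a short analysis shows the admissible strings are those with a single ``switching'' peak carrying two $1$s and strict alternation elsewhere, i.e.\ $\langle 1(01)^p(10)^q1\rangle$ with $p+q=(|E(Z_i)|-2)/2$. For a crown there are no terminal edges, and the cyclic chain of alternating valleys and peaks admits exactly the two alternating assignments $\langle(10)^{|E(Z_i)|/2}\rangle$ and $\langle(01)^{|E(Z_i)|/2}\rangle$, consistency being guaranteed by the even length of a crown. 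This establishes \eqref{sequence}, and I expect the M-fence enumeration, namely verifying that the $(p,q)$-parametrisation is simultaneously sound and exhaustive, to be the step requiring the most care.

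Finally, Part~3 follows formally. Because admissibility is independent across the $Z_i$ by Part~2, the family of admissible subsets of $E(N)$ is the direct product $\prod_{i=1}^{d}\mathcal S(Z_i)$ for any ordering of $\mathcal Z$, and via the bijection of Theorem~\ref{thm:bijection} this family is identified with $\mathcal T$. The product is non-empty if and only if every factor is non-empty, and by \eqref{sequence} the only way a factor can be empty is for $Z_i$ to be a W-fence, which gives the tree-based characterisation.
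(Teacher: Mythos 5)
This theorem is quoted from \cite{Hayamizu-2021-StructureTheoremRooted} and the present paper gives no proof of it, so the comparison is necessarily against the cited source; your reconstruction is correct and follows essentially the same route as the original: uniqueness of the decomposition via the auxiliary degree-at-most-$2$ graph on $E(N)$ whose components (paths and cycles, with automatically alternating head/tail joins) are exactly the maximal zig-zag trails, locality of (C1)--(C3) to those trails, and a per-type propagation argument (forced terminal edges, alternating ``exactly one''/``at least one'' constraints) yielding \eqref{sequence} and hence the direct-product characterization. The only step deserving a fully written-out argument is the M-fence case --- soundness and exhaustiveness of the $\langle 1(01)^p(10)^q1\rangle$ parametrisation, i.e.\ that admissibility forces exactly one doubly-selected peak --- which you correctly single out as the delicate point.
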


\begin{proposition}[\cite{Hayamizu-2021-StructureTheoremRooted}]\label{prop:linear.zzt}
The maximal zig-zag trail decomposition $\mathcal{Z}$ of $N$ can be computed in $\Theta(|E(N)|)$ time. 	
\end{proposition}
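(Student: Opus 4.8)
The plan is to reduce the computation of $\mathcal{Z}$ to finding the connected components of an auxiliary graph of maximum degree two, which can then be done by a single standard traversal. The starting observation is that the binary degree constraints (P1)--(P3) bound how a zig-zag trail can be extended at any edge. Consider an edge $e=(u,w)$ of $N$. By the definition of a zig-zag trail, a distinct edge $e'$ can be adjacent to $e$ in a trail only if $\tail(e')=u$ (a shared tail) or $\head(e')=w$ (a shared head). Since $\outdeg_N(u)\leq 2$ and $\indeg_N(w)\leq 2$, there is at most one edge sharing the tail $u$ with $e$ and at most one edge sharing the head $w$ with $e$. Hence every edge of $N$ has at most two such \emph{zig-zag neighbours}.

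First I would build the auxiliary graph $L$ whose vertex set is $E(N)$ and in which two distinct edges of $N$ are joined precisely when they share a common head or a common tail. By the observation above, $L$ has maximum degree at most $2$; moreover $L$ is simple, since two edges sharing both a head and a tail would coincide in the simple graph $N$. A graph of maximum degree at most two is a disjoint union of simple paths and cycles, so each connected component of $L$ is either a path or a cycle. Next I would identify each component of $L$ with a member of $\mathcal{Z}$: a path component $e_1,\dots,e_m$ supplies exactly an edge ordering satisfying the consecutive head/tail condition, and is maximal because it cannot be extended within its component, yielding the three fence types; a cycle component yields a closed alternating sequence, i.e.\ a crown. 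Conversely, the edge-set of any maximal zig-zag trail is connected in $L$ and, by maximality, coincides with a whole component. Thus the components of $L$ are exactly the elements of $\mathcal{Z}$, recovering the partition of $E(N)$ whose uniqueness is guaranteed by Theorem~\ref{thm:structure}(1).

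For the running time, I would argue that $L$ is constructed in $\Theta(|E(N)|)$ time: iterating once over the vertices of $N$ and, for each vertex, over its at most four incident edges suffices to record all adjacencies, and $|V(N)|=O(|E(N)|)$ because $N$ is connected. Extracting the components together with their path/cycle structure is a single depth-first (or breadth-first) traversal of $L$, which has $|E(N)|$ vertices and $O(|E(N)|)$ edges and therefore runs in $O(|E(N)|)$ time. Since any algorithm must at least read the input, $\Omega(|E(N)|)$ is a matching lower bound, giving the claimed $\Theta(|E(N)|)$ complexity.

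The only genuinely delicate point is the correspondence in the second step: one must verify that ``maximal path or cycle in $L$'' coincides with ``maximal zig-zag trail in $N$'', in particular that trails (which permit repeated vertices but not repeated edges) correspond to components visiting each vertex of $L$ exactly once, and that the cyclic crown case is captured by cycle components rather than being wrongly split into a path. Once this identification is established carefully, the remaining complexity bookkeeping is routine.
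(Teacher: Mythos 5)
Your proposal is correct. Note that the paper does not prove this proposition itself---it is quoted from the cited structure-theorem paper---but your argument is essentially the same as the one used there: the key point in both is that in a binary network each edge has at most two ``zig-zag neighbours'' (one sharing its tail, one sharing its head), so the maximal trails are exactly the connected components of this degree-at-most-two adjacency structure on $E(N)$, recoverable by a single linear-time greedy traversal; your explicit auxiliary graph $L$ is just a clean packaging of that traversal.
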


\begin{figure}[htbp]
    \centering
    \includegraphics[width=0.6\textwidth]{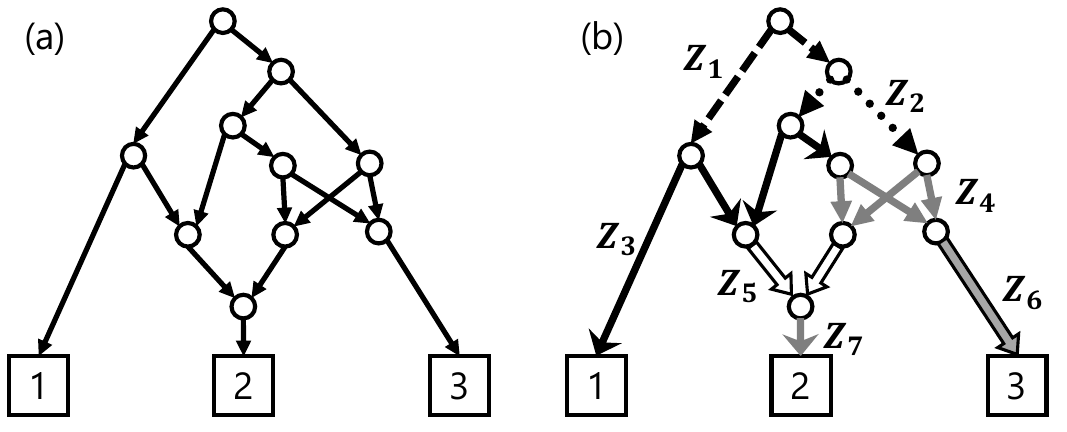}
    \caption{(a) A rooted binary phylogenetic network $N$. (b) The maximal zig-zag trail decomposition $\mathcal Z = \{Z_1, \dots, Z_7\}$  of $N$, where $Z_1$, $Z_2$ and $Z_3$ are M-fences, $Z_4$ is a crown, $Z_5$ is a W-fence, and $Z_6$ and $Z_7$ are N-fences.}
    \label{fig:zigzagtrail decomposition}
\end{figure}

	



Theorem~\ref{thm:structure} and Proposition \ref{prop:linear.zzt} have furnished optimal algorithms for various computational problems on support trees \cite{Hayamizu-2021-StructureTheoremRooted, HayamizuMakino-2023-RankingTopkTrees}.
For example, the number $|\mathcal{T}|$ of support trees of $N$ is given by $|\mathcal{T}|=\prod_{i=1}^d |\mathcal S(Z_i)|$, and using \eqref{eq:number.admissible.sets}, it can be computed in $\Theta(|E(N)|)$ time.
\begin{align}
  \label{eq:number.admissible.sets}
      |\mathcal S(Z_i)| = \begin{cases}
      0 & \text{$Z_i$ is an W-fence}\\
        1 & \text{if $Z_i$ is an N-fence}\\
        2 & \text{if $Z_i$ is a crown}\\
        |E(Z_i)|/2 & \text{if $Z_i$ is an M-fence}\\
        \end{cases}
    \end{align}

\section{The family of support networks and its two subfamilies}
\label{sec:Problem}
For a rooted (not-necessarily-binary) phylogenetic network $N=(V, E)$, 
$r(N)$ denotes the \emph{reticulation number} of $N$, i.e., the number of reticulations in $V$, and $\mathrm{level}(N)$ denotes the \emph{level} of $N$, i.e., the maximum number of reticulations of $N$ contained in a block of $N$, and the \emph{tier} is $|E|-|V|+1$. By definition, $\mathrm{level}(N) \leq r(N)$ holds. 
In general,  $r(N)\leq |E|-|V|+1$ holds. 
If $N$ is binary, or if $N$ is a non-binary network where each $v \in V$ satisfies $\indeg_N(v)\leq 2$ and $\outdeg_N(v)\leq 2$, then the hand-shaking lemma gives $r(N)=|E|-|V|+1$. Since we restrict our attention to binary networks in this paper, we use the terms tier and reticulation number interchangeably.

Let $N$ and $\tilde{G}$ be rooted binary phylogenetic networks on $X$ and let $G$ be a spanning subgraph  of $N$. If $\tilde{G}$ is obtained by smoothing all $v\in V(G)$ with $\indeg_G(v)=\outdeg_G(v)=1$, then $G$ is a \emph{support network} of $N$, and $\tilde{G}$ is a \emph{base network} of $N$. 
Unlike support and base trees, support and base networks always exist since $N$ itself is a support network of $N$.
The \emph{base tier} of $N$, denoted by $r^\ast (N) $, is the minimum value of $r(G):=|E(G)|-|V(G)|+1$,  which equals $r(\tilde{G})$,  over all support networks $G$ of $N$. Similarly, the \emph{base level} of $N$, denoted by $\mathrm{level}^\ast (N) $, is the minimum value of $\mathrm{level}(G)$, which equals $\mathrm{level}(\tilde{G})$, over all support networks $G$ of $N$.   
If $k$ is the base tier (resp.\ base level) of $N$, then $N$ is  \emph{tier-$k$-based} (resp.\ \emph{level-$k$-based}). 


For unrooted phylogenetic networks, Fischer and Francis \cite{FischerFrancis-2020-HowTreebasedMy} has introduced the concept of support networks to discuss tier-$k$-based and level-$k$-based networks. While the definitions are analogous, the mathematical properties of these concepts differ between rooted and unrooted networks. Indeed, for unrooted $N$, it was shown in \cite{FischerFrancis-2020-HowTreebasedMy} that if $N$ has at most one non-trivial block, then $r^\ast (N) = \mathrm{level}^\ast(N)$ holds. For rooted $N$, this equality does not hold in general (see Figure \ref{fig:2 reticulation, level-1} for a counterexample).

\begin{figure}[htbp]
  \centering
  \includegraphics[width=.8\textwidth]{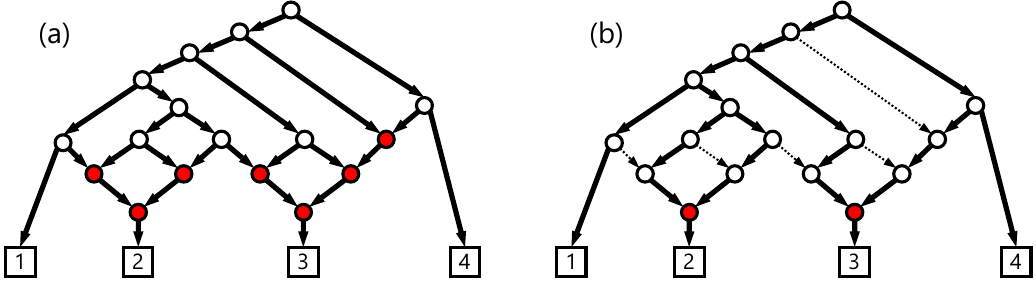}
  \caption{(a) A rooted binary phylogenetic network $N$ with $r^\ast(N)=2$ and $\mathrm{level}^\ast(N)=1$.  
           (b) A support network $G$ of $N$ that attains the optimal values $r(G) = 2$ and $\mathrm{level}(G) = 1$. The edges of $G$ are shown by solid arrows. The reticulations in each graph are colored red.}
  \label{fig:2 reticulation, level-1}
\end{figure}

The focus of this paper is on the computation of  $r^\ast (N)$ and $\mathrm{level}^\ast(N)$ for rooted $N$, with particular emphasis on the latter which presents greater computational challenges. We approach these problems using the families of minimal and minimum support networks of $N$, which are defined as follows:

\begin{definition}
\label{def:ABC}
For a rooted binary phylogenetic network $N$,  
 \begin{itemize}
 	\item $\mathcal{A}_N$ is the set of all support networks of $N$; 
 	\item $\mathcal{B}_N$ is the set of minimal support networks of $N$ (i.e. those with a minimal edge-set); 
 	\item $\mathcal{C}_N$ is  the set of minimum support networks of $N$ (i.e. those with the fewest edges).
 \end{itemize}
\end{definition}

Definition \ref{def:ABC} implies $(\emptyset \neq ) \mathcal{A}_N\supseteq \mathcal{B}_N\supseteq \mathcal{C}_N$. 
As Figure \ref{fig:example} shows, these families can be all distinct.  
We also note that if $N$ is tree-based, then $\mathcal{C}_N$ is nothing but the set $\mathcal{T}$ of support trees of $N$ characterized in Theorems \ref{thm:bijection} and \ref{thm:structure}. 

\begin{figure}[htbp]
  \centering
  \includegraphics[width=0.95\textwidth]{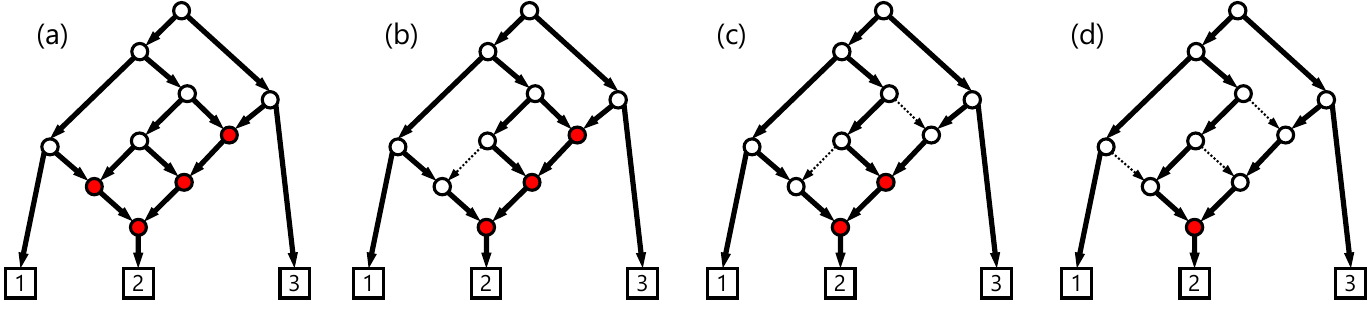}
  \caption{
  (a): A rooted binary phylogenetic network $N$. (b): A support network in $\mathcal{A}_N \setminus  \mathcal{B}_N$. (c): A support network in $\mathcal{B}_N \setminus \mathcal{C}_N$.  (d): A support network in $\mathcal{C}_N$. The reticulations in each graph are colored red.}
  \label{fig:example}
\end{figure}

\section{Counting the support networks of three types}\label{sec:count.main}
Examining the proper subfamilies $\mathcal{B}_N$ and $\mathcal{C}_N$ rather than the entire family $\mathcal{A}_N$ of support networks could reduce the search space for computing $r^\ast(N)$ or $\mathrm{level}^\ast (N)$. A natural question is how substantial this reduction might be. In this section, we generalize Theorem~\ref{thm:structure} to derive analogous direct-product characterizations of $\mathcal{A}_N$, $\mathcal{B}_N$, and $\mathcal{C}_N$, and determine their cardinalities. To achieve this, we relax condition (C3) in Definition~\ref{def:admissible} to introduce different notions of admissibility as follows.

\begin{definition}
  \label{def:alpha-admissible}
  Let $N$ be a rooted binary phylogenetic network and let $G$ be any subgraph of $N$. A subset $S$ of $E(G)$ is \emph{$\mathcal A$-admissible} if it satisfies the following conditions:
  \begin{description}
    \item [(C1$^\star$)] If $(u, v)$ is an edge of $G$ with $\outdeg_N(u) = 1$ or $\indeg_N(v) = 1$, then $S$ contains $(u,v)$; 
    \item [(C2$^\star$)] If $e_1$ and $e_2$ are distinct edges of $G$ with  $\tail(e_1) = \tail(e_2)$ or $\head(e_1) = \head(e_2)$, then $S$ contains at least one of $\{e_1, e_2\}$.
  \end{description}
 In particular, given an $\mathcal A$-admissible subset $S$ of $E(G)$,  $S$ is \emph{$\mathcal B$-admissible} if it is minimal, i.e. no proper subset of $S$ is an $\mathcal A$-admissible subset of $E(G)$, and  $S$ is \emph{$\mathcal C$-admissible} if it is the smallest among all $\mathcal A$-admissible subsets of $E(G)$.
\end{definition}

 By the maximality of each zig-zag trail $Z_i$ in $N$, we have $\outdeg_N(u) = \outdeg_{Z_i}(u) = 1$ and  $\indeg_N(v) = \indeg_{Z_i}(v) = 1$. 
Then, $S \subseteq E(Z_i)$ satisfies (C1$^\star$) in Definition~\ref{def:alpha-admissible} if and only if $S$ contains both terminal edges of each fence $Z_i \in\mathcal{Z}$. This is the same requirement as in (C1) in Definition~\ref{def:admissible}. By contrast, unlike (C3) in Definition~\ref{def:admissible}, (C2$^\star$) only requires that for any two consecutive edges of $Z_i$ with a common head or tail, $S$ contains at least one of them. In other words,  (C2$^\star$)  allows edge selections that yield reticulations, and for this difference, 
an $\mathcal{A}$-admissible subset of $E(N)$ can induce non-tree graphs. 

Lemma \ref{thm:bijection.ABC} is a generalization of Theorem \ref{thm:bijection} and of the second statement of Theorem \ref{thm:structure}. We omit the proof here.

\begin{lemma}
  \label{thm:bijection.ABC}
  Let $N$ be a rooted binary phylogenetic network, and let $\mathcal{X} \in \{\mathcal{A}, \mathcal{B}, \mathcal{C}\}$. 
  Then, there is a one-to-one correspondence between the family $\mathcal{X}_N$ of support networks and the family of $\mathcal{X}$-admissible subsets of $E(N)$. Moreover,  the subgraph $N[S]$ of $N$ induced by $S\subseteq E(N)$ is a support network in $\mathcal{X}_N$ if and only if $S \cap E(Z_i)$ is an $\mathcal{X}$-admissible subset of $E(Z_i)$ for each maximal zig-zag trail $Z_i$ in $N$.
\end{lemma}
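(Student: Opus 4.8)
The plan is to reduce Lemma~\ref{thm:bijection.ABC} to the already-established machinery of Theorem~\ref{thm:structure}, exploiting that $\mathcal{X}$-admissibility differs from ordinary admissibility only in the relaxation of (C3) to (C2$^\star$). The overall strategy mirrors the proof of Theorem~\ref{thm:bijection} combined with the decomposition principle of Theorem~\ref{thm:structure}, carried out uniformly for $\mathcal{X}\in\{\mathcal{A},\mathcal{B},\mathcal{C}\}$. I would structure the argument in three layers: first the graph-theoretic characterization of when $N[S]$ is a support network, then the localization to individual zig-zag trails, and finally the specialization to the minimal ($\mathcal{B}$) and minimum ($\mathcal{C}$) cases.

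\textbf{Step 1: $\mathcal{A}$-admissibility characterizes support networks.} First I would show that for $S\subseteq E(N)$, the spanning subgraph $N[S]$ (taken with vertex set $V(N)$, so that it is genuinely spanning) is a support network of $N$ if and only if $S$ is $\mathcal{A}$-admissible. Recall that $G$ is a support network precisely when $G$ is a spanning subgraph of $N$ that smooths down to a rooted binary phylogenetic tree-shaped network on $X$ --- equivalently, $G$ must itself satisfy (P1)--(P3) after smoothing. The key observations are: (C1$^\star$) guarantees that every edge forced by a degree-one endpoint in $N$ is retained, so no leaf in $X$ and no structurally essential vertex is disconnected; (C2$^\star$) guarantees that no vertex of $N$ loses \emph{all} of its in-edges or \emph{all} of its out-edges, i.e.\ that $G$ remains a spanning subgraph in which every non-root, non-leaf vertex still has $\indeg_G(v)\ge 1$ and $\outdeg_G(v)\ge 1$. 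Conversely, violating (C1$^\star$) or (C2$^\star$) creates an isolated or improperly-connected vertex, so that $G$ fails to be a valid support network. The crucial point distinguishing this from Theorem~\ref{thm:bijection} is that we no longer impose ``exactly one'' at a shared head, so both in-edges may be kept, producing a reticulation; this is exactly why $N[S]$ may be a non-tree support network.

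\textbf{Step 2: Localization to zig-zag trails.} Next I would invoke the uniqueness of the maximal zig-zag trail decomposition $\mathcal{Z}=\{Z_1,\dots,Z_d\}$ (Theorem~\ref{thm:structure}(1)) together with the degree identities noted just before the lemma, namely $\outdeg_N(u)=\outdeg_{Z_i}(u)$ and $\indeg_N(v)=\indeg_{Z_i}(v)$ for the relevant endpoints, which hold by maximality of each trail. Since the edge sets $E(Z_1),\dots,E(Z_d)$ partition $E(N)$ and each constraint in (C1$^\star$)--(C2$^\star$) involves only edges sharing a common endpoint --- hence edges lying in a common maximal zig-zag trail --- the global condition ``$S$ is $\mathcal{A}$-admissible'' decouples into the conjunction of the local conditions ``$S\cap E(Z_i)$ is $\mathcal{A}$-admissible in $E(Z_i)$'' over all $i$. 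This gives the stated trail-by-trail criterion for $\mathcal{X}=\mathcal{A}$, and the one-to-one correspondence follows because, as in Theorem~\ref{thm:bijection}, distinct admissible edge-sets induce distinct (edge-labelled) spanning subgraphs.

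\textbf{Step 3: The $\mathcal{B}$ and $\mathcal{C}$ cases.} For $\mathcal{X}\in\{\mathcal{B},\mathcal{C}\}$ I would argue that minimality and minimum-cardinality also localize across the decomposition. This is the step I expect to be the main obstacle, since minimality is a global property that does not obviously commute with a partition. The resolution is that the constraints in different trails are on disjoint edge sets and hence \emph{independent}: an $\mathcal{A}$-admissible $S$ is minimal if and only if each restriction $S\cap E(Z_i)$ is minimal within $E(Z_i)$, because one could otherwise remove an edge from a single non-minimal restriction without affecting admissibility in any other trail, and conversely removing any edge from a trail-wise-minimal selection violates (C1$^\star$) or (C2$^\star$) \emph{within that trail}. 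The same independence yields $|S|=\sum_i |S\cap E(Z_i)|$, so $S$ has minimum cardinality if and only if each $S\cap E(Z_i)$ is minimum within its trail. Combining these with Step~2 establishes the biconditional and the bijection simultaneously for all three choices of $\mathcal{X}$, completing the proof. I would remark that this independence across trails is precisely what makes the direct-product characterizations of $\mathcal{A}_N$, $\mathcal{B}_N$, and $\mathcal{C}_N$ (and their product counting formulas) possible in the following section.
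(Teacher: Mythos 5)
The paper offers no proof of Lemma~\ref{thm:bijection.ABC} to compare against---it is stated as ``a generalization of Theorem~\ref{thm:bijection} and of the second statement of Theorem~\ref{thm:structure}'' with the proof explicitly omitted. Your three-step argument is exactly the generalization the authors intend, and it is correct in substance: the decoupling of (C1$^\star$)--(C2$^\star$) across the maximal zig-zag trails (every pair of edges sharing a head or a tail lies in a common maximal trail, and the trails partition $E(N)$) is the heart of the matter, and your observation that minimality and minimum cardinality localize over a partition with independent constraints is precisely what reduces the $\mathcal{B}$ and $\mathcal{C}$ cases to the $\mathcal{A}$ case and what licenses the direct-product formulas of Section~\ref{sec:count.main}.

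Two points in your Step~1 deserve more care than you give them. First, ``no vertex loses all of its in-edges or all of its out-edges'' yields the degree conditions but not, by itself, that $N[S]$ is a connected spanning subgraph; you should add that in the acyclic graph $N[S]$ every vertex other than $\rho$ retains an in-edge, so following in-edges backwards from any vertex terminates at $\rho$, whence every vertex is reachable from the root and $N[S]$ is spanning and connected (and its leaf set is exactly $X$ because every non-leaf of $N$ keeps an out-edge). Second, the definition of a support network requires the smoothed graph $\tilde G$ to be a rooted binary phylogenetic network, hence simple; smoothing an $\mathcal{A}$-admissible $N[S]$ can in principle create parallel arcs (two internally disjoint $u$--$v$ paths all of whose internal vertices become degree-$(1,1)$ in $N[S]$), a degeneracy that cannot arise for support trees. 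This is arguably a wrinkle in the paper's definitions rather than in your proof---the counting results plainly treat every $\mathcal{A}$-admissible set as yielding a support network---but an airtight write-up should either rule out this configuration or record that $\tilde G$ is permitted to have parallel arcs.
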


%

\subsection{The number $|\mathcal{A}_N|$ of all support networks}
\label{sec: all support network}
 As explained above, one can construct an $\mathcal{A}$-admissible subset of $E(N)$ simply by obeying (C1$^\star$) and (C2$^\star$) in Definition \ref{def:alpha-admissible}, namely, by selecting both terminal edges of each fence and by avoiding two consecutive unselected edges in each $Z_i$. Thus, the family $\mathcal{S}_{\mathcal{A}}(Z_i)$ of $\mathcal{A}$-admissible subsets of $E(Z_i)$ is expressed by \eqref{eq:sequence for support network} for each $Z_i\in \mathcal{Z}$.  
    \begin{align}
      \mathcal{S}_{\mathcal{A}}{(Z_i)} = \begin{cases}
        \{ \langle b_1\cdots b_{|E(Z_i)|}\rangle \mid \text{$b_1 = b_{|E(Z_i)|} = 1$, $\langle 00\rangle$ is not a subsequence}\}\\
        \hfill \text{if $Z_i$ is a fence}\\
        \{  \langle b_1 \cdots b_{|E(Z_i)|}\rangle \mid \text{$\langle 00\rangle$ is not a subsequence for any ordering} \}\\
        \hfill \text{if $Z_i$ is a crown}
        \end{cases}\label{eq:sequence for support network}
    \end{align}
    
    By Lemma \ref{thm:bijection.ABC},  $\mathcal{A}_N$ is characterized by $\mathcal{A}_N = \prod_{i=1}^d \mathcal{S}_{\mathcal{A}} (Z_i)$. Therefore, $|\mathcal{A}_N| = \prod_{i=1}^d |\mathcal{S}_{\mathcal{A}} (Z_i)|$. As in \eqref{eq:sequence for support network}, each number $|\mathcal{S}_{\mathcal{A}} (Z_i)|$ depends on whether $Z_i$ is a fence or not. This  number can be more explicitly represented using Fibonacci and Lucas numbers (OEIS A000045 and OEIS A000032, respectively) as follows. The proof of Theorem \ref{thm:fibonacci.lucas} is given in Appendix \ref{apd.proof:fibonacci.lucas}.

  
  \begin{theorem}\label{thm:fibonacci.lucas}
    Let $N$ be a rooted binary phylogenetic network and $\mathcal{Z} = \{Z_1, \dots, Z_d\}$ be the maximal zig-zag trail decomposition of $N$. 
    Let $\{F_n\}$ be the Fibonacci sequence defined by $F_1 = F_2 = 1$ and $F_n = F_{n-1} + F_{n-2}$ ($n \geq 3$), and $\{L_n\}$ be the Lucas sequence  defined by $L_1 = 1, L_2 = 3$ and $L_n = L_{n-1} + L_{n-2}$ ($n\geq 3$).
    Then, the number of support networks $|\mathcal{A}_N|$ is given by \eqref{eq:count all support}. 
    \begin{align}
      \label{eq:count all support}
      |\mathcal{A}_N| = \prod_{Z_i: \text{fence}} F_{|E(Z_i)|} \cdot \prod_{Z_i: \text{crown}} L_{|E(Z_i)|}
    \end{align}
    Moreover, $|\mathcal{A}_N| = \Theta(\phi^{|E(N)|})$ holds, where $\phi = (1+\sqrt{5})/2 = 1.6180\dots$ is the golden ratio.
  \end{theorem}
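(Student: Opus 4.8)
The plan is to reduce $|\mathcal{A}_N|$ to its per-trail factors, identify each factor with a Fibonacci or Lucas number, and then pass to the asymptotics via Binet's formulas. Lemma~\ref{thm:bijection.ABC} together with \eqref{eq:sequence for support network} already gives $\mathcal{A}_N=\prod_{i=1}^{d}\mathcal{S}_{\mathcal{A}}(Z_i)$, hence $|\mathcal{A}_N|=\prod_{i=1}^{d}|\mathcal{S}_{\mathcal{A}}(Z_i)|$; so it suffices to show $|\mathcal{S}_{\mathcal{A}}(Z_i)|=F_{m}$ for a fence and $|\mathcal{S}_{\mathcal{A}}(Z_i)|=L_{m}$ for a crown, where $m=|E(Z_i)|$. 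Substituting into the product yields \eqref{eq:count all support} at once.

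For a fence, \eqref{eq:sequence for support network} identifies $\mathcal{S}_{\mathcal{A}}(Z_i)$ with the set of binary strings $\langle b_1\cdots b_m\rangle$ satisfying $b_1=b_m=1$ and containing no factor $\langle 00\rangle$. Writing $a_m$ for their number, I would condition on the penultimate bit: if $b_{m-1}=1$ then $b_1\cdots b_{m-1}$ is any admissible string of length $m-1$, while if $b_{m-1}=0$ then $b_{m-2}=1$ is forced and $b_1\cdots b_{m-2}$ is any admissible string of length $m-2$. This yields $a_m=a_{m-1}+a_{m-2}$ for $m\ge 3$, and the base cases $a_1=a_2=1$ (the strings $\langle1\rangle$ and $\langle11\rangle$) give $a_m=F_m$. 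Equivalently, such strings biject with compositions of $m-1$ into parts $1$ and $2$.

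For a crown, \eqref{eq:sequence for support network} identifies $\mathcal{S}_{\mathcal{A}}(Z_i)$ with the cyclic binary strings of length $m$ having no two cyclically consecutive $0$s. Sending each string to its set of $0$-positions gives a bijection onto the independent sets of the cycle $C_m$, whose number is the Lucas number $L_m$. I would justify this either by a transfer-matrix computation, whose count is $\phi^{m}+\psi^{m}=L_m$ (the eigenvalues of the transfer matrix being $\phi$ and $\psi:=(1-\sqrt5)/2$), or by a one-line deletion argument that splits on whether a fixed position carries a $0$ and thereby reduces a cyclic instance to two fence-type instances, giving $L_m=F_{m-1}+F_{m+1}$.

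Finally, for the asymptotics I would invoke Binet's formulas $F_m=(\phi^{m}-\psi^{m})/\sqrt5$ and $L_m=\phi^{m}+\psi^{m}$ with $|\psi|=\phi^{-1}<1$, which yield the uniform two-sided per-trail bounds $\phi^{m-2}\le F_m\le\phi^{m-1}$ for every fence and $\phi^{m}\le L_m\le(1+\phi^{-8})\,\phi^{m}$ for every crown (recall crowns have even $m\ge 4$). Since the maximal zig-zag trails partition $E(N)$, we have $\sum_i|E(Z_i)|=|E(N)|$, so multiplying these bounds over $\mathcal{Z}$ produces the factor $\phi^{|E(N)|}$. \textbf{The main obstacle is controlling the residual constants, which do not cancel for free:} a single crown already contributes strictly more than $\phi^{m}$, and a naive product of the upper bounds accrues a factor $(1+\phi^{-8})^{\#\text{crowns}}$ while the lower bound loses $\phi^{-2\,\#\text{fences}}$. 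I would resolve this by pairing each crown with distinct fences: in a binary network the peaks of a crown have in-degree $1$ and its valleys out-degree $1$ in $N$, so every edge joining a crown to the rest of $N$ is forced into a short fence; hence $N$ contains at least as many fences as crowns, and since $(1+\phi^{-8})/\phi<1$ the crown surplus is absorbed, giving $|\mathcal{A}_N|\le\phi^{|E(N)|}$. A matching lower bound, obtained from a family of networks built around long trails, then establishes that the estimate is of the claimed order, so that $|\mathcal{A}_N|=\Theta(\phi^{|E(N)|})$.
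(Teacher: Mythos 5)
Your counting argument is correct and arrives at the formula by a mildly different route from the paper's. Where the paper maps each $\mathcal{A}$-admissible subset of a fence (resp.\ crown) to a matching of the undirected path on the non-terminal edges (resp.\ of the undirected cycle) and then cites Farrell's counts $F_{\ell+2}$ and $L_\ell$, you count the $0/1$-strings directly: the recurrence $a_m=a_{m-1}+a_{m-2}$ with $a_1=a_2=1$ for fences, and independent sets of the cycle $C_m$ for crowns. These are standard re-encodings of the same combinatorics; your version is self-contained while the paper's outsources the enumeration. One small point worth making explicit (as the paper does for fences): identifying a crown's admissible subsets with cyclic strings having no two consecutive $0$s tacitly uses that the trail visits each vertex exactly once, which holds because $N$ is binary.

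The asymptotic claim is where the substance lies, and you have correctly isolated the step that the paper's own proof elides (the paper multiplies $\Theta(\phi^{m_i})$ bounds over an unbounded number of factors, which does not preserve the implied constants). Your upper bound is sound: the inequality $F_m\le\phi^{m-1}$ gives a spare factor $\phi^{-1}$ per fence, and your claim that the number of fences is at least the number of crowns is true and can be made rigorous by counting terminal edge-ends --- every external edge entering a peak or leaving a valley of a crown is necessarily a terminal edge of some fence, each fence has only two terminal ends, and each crown has at least four attachment points, so in fact the number of fences is at least twice the number of crowns. (Your phrase ``short fence'' is neither needed nor justified; only the terminal-end count matters.) However, the pointwise lower bound $|\mathcal{A}_N|=\Omega(\phi^{|E(N)|})$ is genuinely false, not merely hard: if $N$ is a tree, every maximal trail is a one-edge N-fence, so $|\mathcal{A}_N|=1$ while $\phi^{|E(N)|}$ is exponential; your own bookkeeping of the $\phi^{-2\cdot(\text{number of fences})}$ loss already reveals this. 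Your fallback --- exhibiting a family of networks built around long trails --- therefore proves only that the upper bound is attained in the worst case, i.e., that $\max\{|\mathcal{A}_N| : |E(N)|=m\}=\Theta(\phi^m)$. That is the only defensible reading of the theorem's $\Theta$, but you should say so explicitly rather than presenting the construction as completing a two-sided bound that holds for every $N$.
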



  
  Theorem \ref{thm:fibonacci.lucas} yields an obvious algorithm for computing $|\mathcal{A}_N|$. It first computes the maximal zig-zag trail decomposition $\mathcal{Z} = \{Z_1, \dots, Z_d\}$ of $N$, and then calculates $|\mathcal{A}_N|$ using \eqref{eq:count all support}. 
  
  \begin{proposition}
  	    \label{prop:complexity.count.A}
     $|\mathcal{A}_N|$ can be computed by in $\Theta(|E(N)|)$ time. 
  \end{proposition}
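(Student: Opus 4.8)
The plan is to combine the formula \eqref{eq:count all support} from Theorem~\ref{thm:fibonacci.lucas} with the linear-time computability of the maximal zig-zag trail decomposition guaranteed by Proposition~\ref{prop:linear.zzt}. The algorithm has two phases: first compute $\mathcal{Z} = \{Z_1, \dots, Z_d\}$, and then evaluate the product in \eqref{eq:count all support} by iterating over the trails. The goal is to show that both phases run in $\Theta(|E(N)|)$ time, so that the total is linear.

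For the first phase, Proposition~\ref{prop:linear.zzt} already states that $\mathcal{Z}$ can be computed in $\Theta(|E(N)|)$ time, so I would simply invoke it. The only thing worth remarking is that this decomposition partitions $E(N)$, so $\sum_{i=1}^{d} |E(Z_i)| = |E(N)|$; this identity is what will control the cost of the second phase.

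For the second phase, I would precompute the relevant Fibonacci and Lucas numbers up to index $\max_i |E(Z_i)| \leq |E(N)|$ by a single linear scan using the recurrences $F_n = F_{n-1} + F_{n-2}$ and $L_n = L_{n-1} + L_{n-2}$, filling a table in $\Theta(|E(N)|)$ time. Then, for each trail $Z_i$, I determine in $\Theta(|E(Z_i)|)$ time whether it is a fence or a crown (this classification is already available from the decomposition, or can be read off from the trail's structure), look up the appropriate table entry $F_{|E(Z_i)|}$ or $L_{|E(Z_i)|}$ in $O(1)$ time, and accumulate it into a running product. Summing the per-trail costs and using $\sum_i |E(Z_i)| = |E(N)|$ shows that the traversal over all trails costs $\Theta(|E(N)|)$ in total. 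Adding the linear cost of the decomposition and the table construction gives the claimed $\Theta(|E(N)|)$ bound, where the lower bound $\Omega(|E(N)|)$ follows simply because any algorithm must read the whole input.

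The step requiring the most care is the treatment of the arithmetic itself: since $|\mathcal{A}_N| = \Theta(\phi^{|E(N)|})$ grows exponentially, the entries $F_{|E(Z_i)|}$, $L_{|E(Z_i)|}$ and the accumulated product have $\Theta(|E(N)|)$ bits, so under a bit-complexity model the naive additions and multiplications are not $O(1)$. I expect this to be the main obstacle, and the standard resolution (as is implicit in \eqref{eq:number.admissible.sets} and the analogous linear-time claim for $|\mathcal{T}|$ in \cite{Hayamizu-2021-StructureTheoremRooted}) is to work in the unit-cost RAM model, where each arithmetic operation on a number is counted as $\Theta(1)$; under this convention both the table construction and the product evaluation are genuinely $\Theta(|E(N)|)$, matching Proposition~\ref{prop:linear.zzt}.
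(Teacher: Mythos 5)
Your proposal is correct and follows essentially the same route as the paper's proof: invoke Proposition~\ref{prop:linear.zzt} for the decomposition, classify each trail as fence or crown in time linear in its size, compute the needed Fibonacci and Lucas numbers in linear time, and take the product, with the lower bound coming from reading the input. Your additional remark about the unit-cost RAM model is a fair point of care that the paper leaves implicit, but it does not change the argument.
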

  \begin{proof}
  By Proposition \ref{prop:linear.zzt}, $\mathcal{Z}$ can be computed in $\Theta(|E(N)|)$ time. 
   For each $Z_i\in \mathcal{Z}$, deciding whether $Z_i$ is a crown or not takes $O(|E(Z_i)|)$ time, and $F_{|E(Z_i)|}$ and $L_{|E(Z_i)|}$ can be computed in $O(|E(Z_i)|)$ time. Since $|\mathcal{Z}|=O(|E(N)|)$, one can calculate $|\mathcal{A}_N|$ using \eqref{eq:count all support}  in $O(|E(N)|)$ time.
    Loading $N$ takes $\Omega(|E(N)|)$ time, so the overall complexity is $\Theta(|E(N)|)$. 
  \end{proof}
  
  Similarly, we can develop an algorithm to generate all (or a desired number of) elements of $\mathcal{A}_N$ by sequentially outputting each element of  $\prod_{i=1}^d \mathcal{S}_{\mathcal{A}} (Z_i)$, achieving $\Theta(|E(N)|)$ delay. We refer the reader to  \cite{Hayamizu-2021-StructureTheoremRooted} or \cite{HayamizuMakino-2023-RankingTopkTrees} for the basics on the complexity analysis of listing algorithms.

  

\subsection{The number $|\mathcal{B}_N|$ of minimal support networks}
\label{sec:minimal support network}
We can see that $\mathcal{A}$-admissible subset $S$ of $E(Z_i)$ is $\mathcal{B}$-admissible if and only if $S$ contains no three consecutive edges in the edge sequence $(e_1,\dots, e_{|E(Z_i)|})$ of $Z_i$. 
Thus, the family $\mathcal{S}_{\mathcal{B}}(Z_i)$ of $\mathcal B$-admissible subsets of $E(Z_i)$ is expressed as in \eqref{eq:sequence for minimal support network} for each $Z_i\in \mathcal{Z}$.
\begin{align}
  \mathcal{S}_{\mathcal{B}}(Z_i) =\begin{cases}
    \{ \langle b_1\cdots b_{|E(Z_i)|}\rangle \mid \text{$b_1 = b_{|E(Z_i)|} = 1$, neither $\langle 00\rangle$ nor $\langle 111\rangle$ is a subsequence}\}\\
    \hfill \text{if $Z_i$ is a fence}\\
    \{ \langle b_1 \cdots b_{|E(Z_i)|}\rangle \mid \text{neither $\langle 00\rangle$ nor $\langle 111\rangle$ is a subsequence for any ordering} \}\\
    \hfill \text{if $Z_i$ is a crown}
    \end{cases} \label{eq:sequence for minimal support network}
\end{align}

By Lemma \ref{thm:bijection.ABC}, $\mathcal{B}_N$ is characterized by $\mathcal{B}_N = \prod_{i=1}^d \mathcal{S}_{\mathcal{B}} (Z_i)$.
Therefore, $|\mathcal{B}_N| = \prod_{i=1}^d |\mathcal{S}_{\mathcal{B}} (Z_i)|$. This number is expressed using Padovan numbers (OEIS A000931) and Perrin numbers (OEIS A001608) as follows.
The proof of Theorem \ref{thm:counting by padovan, perrin} is in Appendix \ref{apd.proof.B.padovan.perrin}.

  \begin{theorem}\label{thm:counting by padovan, perrin}
    Let $N$ be a rooted binary phylogenetic network and $\mathcal{Z} = \{Z_1, \dots, Z_d\}$ be the maximal zig-zag trail decomposition of $N$. 
    Let $\{P_n\}$ be the Padovan sequence defined by $(P_1, P_2, P_3) = (1, 1, 1)$ and $P_n = P_{n-2} + P_{n-3}$ ($n \geq 4$), and $\{Q_n\}$ be the Perrin sequence defined by $(Q_1, Q_2, Q_3) = (0, 2, 3)$ and $Q_n = Q_{n-2} + Q_{n-3}$ ($n \geq 4$).
    Then, the number of minimal support networks $|\mathcal{B}_N|$ is given by \eqref{eq:count minimal support}. 
    \begin{align}
      \label{eq:count minimal support}
      |\mathcal{B}_N| = \prod_{Z_i: \text{fence}} P_{|E(Z_i)|} \cdot \prod_{Z_i: \text{crown}} Q_{|E(Z_i)|}
    \end{align}
    Moreover, $|\mathcal{B}_N| = \Theta(\psi^{|E(N)|})$ holds, where $\psi = \sqrt[3]{(9 +\sqrt{69})/18}+ \sqrt[3]{(9 -\sqrt{69})/18} =  1.3247\dots$ is the plastic number.
    \end{theorem}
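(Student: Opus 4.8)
The plan is to reduce the global count to a product of per-trail counts, identify each factor as a Padovan or Perrin number, and then read off the growth rate from the common linear recurrence. By Lemma~\ref{thm:bijection.ABC} together with the characterization \eqref{eq:sequence for minimal support network}, the assignment $S\mapsto(S\cap E(Z_1),\dots,S\cap E(Z_d))$ realizes $\mathcal B_N=\prod_{i=1}^{d}\mathcal S_{\mathcal B}(Z_i)$, so $|\mathcal B_N|=\prod_{i=1}^{d}|\mathcal S_{\mathcal B}(Z_i)|$ and it suffices to evaluate each factor. Writing $n:=|E(Z_i)|$, both the fence and the crown case reduce to counting binary strings of length $n$ containing neither the factor $\langle 00\rangle$ nor $\langle 111\rangle$; the only difference is that a fence imposes the boundary condition $b_1=b_n=1$ and is read linearly, whereas a crown is read cyclically (``for any ordering'' in \eqref{eq:sequence for minimal support network}).

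For a fence I would count these linear strings by a suffix recurrence. Writing $f_n$ for the number of admissible strings, the final symbol is forced to be $1$; a string ending in $\langle 01\rangle$ forces a preceding $1$ (to avoid $\langle 00\rangle$), and deleting its last two symbols is a bijection onto admissible strings of length $n-2$, while a string ending in $\langle 11\rangle$ forces the suffix $\langle 011\rangle$ (to avoid both $\langle 111\rangle$ and $\langle 00\rangle$), and deleting its last three symbols is a bijection onto admissible strings of length $n-3$. Hence $f_n=f_{n-2}+f_{n-3}$, and since $f_1=f_2=f_3=1$ this is exactly the Padovan recurrence, giving $|\mathcal S_{\mathcal B}(Z_i)|=P_n$.

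The crown case is the technically richer one, and I would treat it by the transfer-matrix method. Taking as states the admissible length-two windows $\langle 01\rangle,\langle 10\rangle,\langle 11\rangle$ (the window $\langle 00\rangle$ being forbidden) and declaring a transition legal exactly when it creates neither $\langle 00\rangle$ nor $\langle 111\rangle$ yields a $3\times 3$ zero--one matrix $M$, and admissible cyclic strings of length $n$ correspond bijectively to closed walks of length $n$, so their number is $\mathrm{tr}(M^n)$. A direct computation gives the characteristic polynomial $\lambda^3-\lambda-1$, whence by Cayley--Hamilton $M^n=M^{n-2}+M^{n-3}$ and therefore $\mathrm{tr}(M^n)=\mathrm{tr}(M^{n-2})+\mathrm{tr}(M^{n-3})$; checking the base values $\mathrm{tr}(M)=0$, $\mathrm{tr}(M^2)=2$, $\mathrm{tr}(M^3)=3$ identifies $\mathrm{tr}(M^n)$ with the Perrin number $Q_n$. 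Combining the two cases over $\mathcal Z$ gives the product formula \eqref{eq:count minimal support}.

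For the asymptotics, both $\{P_n\}$ and $\{Q_n\}$ satisfy $a_n=a_{n-2}+a_{n-3}$, whose characteristic polynomial $x^3-x-1$ has the plastic number $\psi$ as its unique real root and a conjugate pair of modulus $\psi^{-1/2}<1$; consequently $P_n=\Theta(\psi^n)$ and $Q_n=\Theta(\psi^n)$. The defining identity $\psi^3=\psi+1$ yields the clean per-fence estimate $P_n\le\psi^n$ by induction, since $P_{n-2}+P_{n-3}\le\psi^{n-2}+\psi^{n-3}=\psi^{n-3}(\psi+1)=\psi^n$, exactly paralleling the bound $F_n\le\phi^n$ behind Theorem~\ref{thm:fibonacci.lucas}. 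I expect the main obstacle to be pinning the base at exactly $\psi$ once the trails are combined. On one hand, $Q_n>\psi^n$ for some small even $n$ (e.g.\ $Q_8=10>\psi^8$), so a naive per-crown bound multiplied over the possibly $\Theta(|E(N)|)$ crowns would push the base above $\psi$; on the other hand, a single long fence, taken in isolation, is diluted by the edges that its peaks and valleys require externally. Both effects are structural: a trail of size $m$ forces roughly $m$ further incidences that must be realized by edges of \emph{other} trails, contributing to $|E(N)|$ while contributing only trivially to the product. The approach I would pursue is an amortized accounting that charges each crown's bounded surplus, and credits each fence's contribution, against these forced connecting edges, thereby showing that the extremal per-edge growth is exactly $\psi$ and hence $|\mathcal B_N|=\Theta(\psi^{|E(N)|})$.
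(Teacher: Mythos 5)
Your derivation of the product formula \eqref{eq:count minimal support} is correct but follows a genuinely different route from the paper. The paper passes from $\mathcal{S}_{\mathcal{B}}(Z_i)$ to maximal matchings: for a fence, the complement $\overline{S}$ of a $\mathcal{B}$-admissible set is exactly a maximal matching of the undirected path on the $m_i-2$ non-terminal edges, and for a crown it is a maximal matching of the corresponding cycle; the counts $P_{\ell+2}$ and $Q_{\ell}$ are then quoted from the literature on maximal matchings. You instead count the $0$--$1$ sequences directly, via the suffix recurrence $f_n=f_{n-2}+f_{n-3}$ for fences and a $3\times 3$ transfer matrix with a trace recurrence for crowns. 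Both arguments are sound: your deletion maps for the suffixes $\langle 01\rangle$ and $\langle 011\rangle$ really are bijections, your transfer matrix on the states $\langle 01\rangle,\langle 10\rangle,\langle 11\rangle$ has characteristic polynomial $\lambda^3-\lambda-1$, the base traces $0,2,3$ match the Perrin initial conditions, and the closed-walk/cyclic-string correspondence is unproblematic since crowns have $m_i\geq 4$. Your version is self-contained; the paper's buys brevity and a conceptual link to matching enumeration.

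On the asymptotics you have correctly identified the weak point, but you stop short of resolving it --- and the paper does not resolve it either: its proof simply multiplies the per-trail estimates $P_n=\Theta(\psi^n)$ and $Q_n=\Theta(\psi^n)$ over the $d$ trails, which is precisely the step you object to, since the hidden constants compound to $c^{d}$ with $d$ possibly $\Theta(|E(N)|)$. In fact the two-sided bound is false as literally stated: for a phylogenetic tree every maximal trail is a single-edge N-fence, so $|\mathcal{B}_N|=1$ while $\psi^{|E(N)|}$ is exponentially large; the $\Omega(\psi^{|E(N)|})$ direction can only be meant as a worst-case statement over networks of a given size. Your proposed amortized accounting --- charging each crown's bounded surplus $Q_{m}/\psi^{m}$ (at most $Q_8/\psi^8\approx 1.055$ for even $m\geq 4$) against the $m$ external incidences its vertices force onto fence edges, whose factors $P_{m'}$ sit strictly below $\psi^{m'}$ --- is the right idea for a rigorous worst-case bound, but it is a genuine piece of work that your proposal only gestures at. As written, neither your argument nor the paper's establishes the $\Theta$ claim beyond the single-trail level; for the product formula itself, which is the substantive content of the theorem, your proof is complete.
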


    \begin{proposition}
    \label{prop:complexity of count minimal support network}
   $|\mathcal{B}_N|$ can be computed in $\Theta(|E(N)|)$ time. 
  \end{proposition}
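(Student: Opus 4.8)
The plan is to mirror the argument already used for Proposition~\ref{prop:complexity.count.A}, since Theorem~\ref{thm:counting by padovan, perrin} expresses $|\mathcal{B}_N|$ as the same kind of product over the maximal zig-zag trails, now with Padovan and Perrin numbers in place of Fibonacci and Lucas numbers. First I would invoke Proposition~\ref{prop:linear.zzt} to compute the maximal zig-zag trail decomposition $\mathcal{Z} = \{Z_1, \dots, Z_d\}$ of $N$ in $\Theta(|E(N)|)$ time. This single step supplies both the trails themselves and, as a byproduct, the data needed to classify each trail.

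Next, for each $Z_i \in \mathcal{Z}$, I would decide whether $Z_i$ is a fence or a crown in $O(|E(Z_i)|)$ time, using the same criterion as in the proof of Proposition~\ref{prop:complexity.count.A}, and then evaluate the corresponding factor of \eqref{eq:count minimal support}. The one point worth verifying is that the Padovan recurrence $P_n = P_{n-2} + P_{n-3}$ and the Perrin recurrence $Q_n = Q_{n-2} + Q_{n-3}$, although they skip indices, are still linear recurrences of bounded order; hence each of $P_{|E(Z_i)|}$ and $Q_{|E(Z_i)|}$ can be computed by iterating its recurrence from the stated initial values in $O(|E(Z_i)|)$ time, exactly as for $F_{|E(Z_i)|}$ and $L_{|E(Z_i)|}$ in the preceding proposition.

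Finally, since the maximal zig-zag trail decomposition is a decomposition of $N$, its edge-sets partition $E(N)$, so $\sum_{i=1}^{d} |E(Z_i)| = |E(N)|$ and $d = O(|E(N)|)$. Summing the per-trail costs and forming the product in \eqref{eq:count minimal support} therefore takes $O(|E(N)|)$ time in total, which combined with the $\Theta(|E(N)|)$ cost of the decomposition gives the upper bound $O(|E(N)|)$. The matching lower bound $\Omega(|E(N)|)$ holds because merely reading the input $N$ already requires $\Omega(|E(N)|)$ time, yielding the claimed $\Theta(|E(N)|)$ bound. I do not expect any genuine obstacle here; the only thing that differs from Proposition~\ref{prop:complexity.count.A} is the index-skipping form of the two recurrences, and checking that these remain order-bounded, and hence linear-time computable, is routine.
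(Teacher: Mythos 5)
Your proposal is correct and follows essentially the same route as the paper, which likewise reduces the claim to the proof of Proposition~\ref{prop:complexity.count.A} after observing that $P_{|E(Z_i)|}$ and $Q_{|E(Z_i)|}$ are computable in $O(|E(Z_i)|)$ time via their bounded-order linear recurrences. Your explicit remark about the index-skipping recurrences still being order-bounded is a minor elaboration the paper leaves implicit, but there is no substantive difference.
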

  \begin{proof}
   The only difference between \eqref{eq:count all support} and \eqref{eq:count minimal support} is that $F_{|E(Z_i)|}$ and $L_{|E(Z_i)|}$ are replaced by $P_{|E(Z_i)|}$ and $Q_{|E(Z_i)|}$, respectively. For each $Z_i\in \mathcal{Z}$, both $P_{|E(Z_i)|}$ and $Q_{|E(Z_i)|}$ can also be computed in $O(|E(Z_i)|)$ time. The remainder is the same as the proof of Proposition \ref{prop:complexity.count.A}.
  \end{proof}
  
As noted in Section \ref{sec: all support network}, the elements of $\mathcal{B}_N$ can also be listed with $\Theta(|E(N)|)$ delay.

\subsection{The number $|\mathcal{C}_N|$ of minimum support networks}
\label{sec:minimum support network}
The construction of $\mathcal{C}$-admissible subsets of each $E(Z_i)$ is the same as that of admissible subsets for support trees, with the only difference that $\mathcal{C}$-admissible subsets allow W-fences. 
\begin{align}\label{eq:sequence for minimum support networks}
  \mathcal{S}_{\mathcal{C}} (Z_i) = \begin{cases}
    \{\langle (10)^{|E(Z_i)|/2}\rangle, \langle (01)^{|E(Z_i)|/2}\rangle \} \hfill \text{if $Z_i$ is a crown}\\
    \{\langle 1(01)^{(|E(Z_i)|-1)/2}\rangle  \} \hfill \text{if $Z_i$ is an N-fence}\\
    \{\langle 1(01)^p (10)^q 1\rangle \mid p, q\in \mathbb{Z}_{\geq 0}, p+q = (|E(Z_i)|-2)/2 \}\qquad\qquad\qquad\\
    \hfill \text{if $Z_i$ is an M-fence or W-fence}
  \end{cases}
\end{align}

    By Lemma \ref{thm:bijection.ABC},  $\mathcal{C}_N$ is characterized by $\mathcal{C}_N = \prod_{i=1}^d \mathcal{S}_{\mathcal{C}} (Z_i)$. Therefore, $|\mathcal{C}_N| = \prod_{i=1}^d |\mathcal{S}_{\mathcal{C}} (Z_i)|$. 
From \eqref{eq:sequence for minimum support networks}, we  have $|\mathcal{S}_{\mathcal{C}}(Z_i)| = 2$ for any crown $Z_i$, $|\mathcal{S}_{\mathcal{C}}(Z_i)| = 1$ for any N-fence  $Z_i$, and $|\mathcal{S}_{\mathcal{C}}(Z_i)| = |E(Z_i)|/2$ for any other $Z_i$. Thus, we obtain Theorem \ref{cor:count minimum support network} and Proposition \ref{prop:count.C.linear}. 
\begin{theorem}
  \label{cor:count minimum support network}
  Let $N$ be a rooted binary phylogenetic network and $\mathcal{Z} = \{Z_1, \dots, Z_d\}$ be the maximal zig-zag trail decomposition of $N$. Then,  $|\mathcal{C}_N| = 2^c\cdot \prod_{Z_i\in \mathcal{Z}_{MW}} (|E(Z_i)|/2) $ holds, where $c$ is the number of crowns in $\mathcal{Z}$, and $\mathcal{Z}_{MW}:=\{Z_i\in \mathcal{Z} \mid Z_i \text{ is an M-fence or W-fence}\}$. 
\end{theorem}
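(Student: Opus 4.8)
The plan is to treat this statement as a direct product-counting consequence of the structural results already in hand, so that the work is almost entirely bookkeeping. First I would invoke Lemma~\ref{thm:bijection.ABC} with $\mathcal{X} = \mathcal{C}$, which yields the direct-product characterization $\mathcal{C}_N = \prod_{i=1}^d \mathcal{S}_{\mathcal{C}}(Z_i)$ over the maximal zig-zag trail decomposition $\mathcal{Z} = \{Z_1, \dots, Z_d\}$. Because the factors range over the pairwise disjoint edge-sets $E(Z_i)$, the cardinality factorizes as $|\mathcal{C}_N| = \prod_{i=1}^d |\mathcal{S}_{\mathcal{C}}(Z_i)|$. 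The remaining task is then to evaluate each factor $|\mathcal{S}_{\mathcal{C}}(Z_i)|$ according to the type of $Z_i$ and to regroup the resulting product.

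Next I would read off the three cases of \eqref{eq:sequence for minimum support networks} and count. For a crown, $\mathcal{S}_{\mathcal{C}}(Z_i)$ consists of the two alternating selections, so $|\mathcal{S}_{\mathcal{C}}(Z_i)| = 2$. For an N-fence there is a unique selection $\langle 1(01)^{(|E(Z_i)|-1)/2}\rangle$, so $|\mathcal{S}_{\mathcal{C}}(Z_i)| = 1$. For an M-fence or a W-fence the selections are parametrized by pairs $(p, q)$ of non-negative integers with $p + q = (|E(Z_i)|-2)/2$; since $p$ may independently take any value in $\{0, 1, \dots, (|E(Z_i)|-2)/2\}$, with $q$ then determined and each admissible value of $p$ producing a distinct 0-1 sequence, there are exactly $(|E(Z_i)|-2)/2 + 1 = |E(Z_i)|/2$ such selections (using that $|E(Z_i)|$ is even for these two types). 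Hence $|\mathcal{S}_{\mathcal{C}}(Z_i)| = |E(Z_i)|/2$.

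Finally I would assemble the product. Writing $c$ for the number of crowns and $\mathcal{Z}_{MW}$ for the set of M-fences and W-fences, the crown factors contribute $2^c$, each N-fence contributes a factor of $1$ and therefore drops out, and the M-fence and W-fence factors contribute $\prod_{Z_i \in \mathcal{Z}_{MW}} (|E(Z_i)|/2)$. Multiplying these gives $|\mathcal{C}_N| = 2^c \cdot \prod_{Z_i \in \mathcal{Z}_{MW}} (|E(Z_i)|/2)$, as claimed. I do not expect a genuine obstacle here: all of the structural content already resides in Lemma~\ref{thm:bijection.ABC} and in the characterization \eqref{eq:sequence for minimum support networks}, and what remains is elementary counting. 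The only points deserving any care are the enumeration of the $(p,q)$ pairs for M- and W-fences (together with the observation that distinct $p$ give distinct selections) and the remark that N-fences contribute trivially, so that the product collapses precisely onto the crowns and the members of $\mathcal{Z}_{MW}$.
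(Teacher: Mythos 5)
Your proof is correct and follows essentially the same route as the paper, which likewise obtains $|\mathcal{C}_N| = \prod_{i=1}^d |\mathcal{S}_{\mathcal{C}}(Z_i)|$ from Lemma~\ref{thm:bijection.ABC} and then reads off the per-trail counts ($2$ for crowns, $1$ for N-fences, $|E(Z_i)|/2$ for M- and W-fences) from \eqref{eq:sequence for minimum support networks}. Your explicit enumeration of the $(p,q)$ pairs is a slightly more detailed justification of the $|E(Z_i)|/2$ factor than the paper gives, but the argument is the same.
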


\begin{proposition}\label{prop:count.C.linear}
    $|\mathcal{C}_N|$ can be computed in $\Theta(|E(N)|)$ time.
\end{proposition}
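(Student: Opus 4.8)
The plan is to mirror the proofs of Propositions \ref{prop:complexity.count.A} and \ref{prop:complexity of count minimal support network}, since Theorem \ref{cor:count minimum support network} has already reduced the computation of $|\mathcal{C}_N|$ to evaluating the closed-form product $2^c \cdot \prod_{Z_i \in \mathcal{Z}_{MW}} (|E(Z_i)|/2)$ over the maximal zig-zag trail decomposition. First I would invoke Proposition \ref{prop:linear.zzt} to obtain $\mathcal{Z} = \{Z_1, \dots, Z_d\}$ in $\Theta(|E(N)|)$ time.

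Next, I would classify each $Z_i$. For the formula it suffices to know, for each trail, whether it is a crown and whether it is an M-fence or W-fence; N-fences contribute the multiplicative factor $1$ and can be ignored. A single scan of the edge sequence of $Z_i$ decides whether $Z_i$ closes up into a cyclic alternating sequence (hence a crown) or is a fence, and in the latter case the parity of $|E(Z_i)|$ separates N-fences from the M/W-fences. This costs $O(|E(Z_i)|)$ time per trail.

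Then I would accumulate the value: maintain a running count $c$ of crowns and, for each M- or W-fence, multiply a running product by $|E(Z_i)|/2$, finally multiplying by $2^c$. As in the earlier propositions, each arithmetic operation is treated as unit cost, so forming $2^c$ (by $c$ doublings) together with the $O(d)$ multiplications costs $O(c + d) = O(|E(N)|)$, since $c \le d \le |E(N)|$ and $\sum_i |E(Z_i)| = |E(N)|$. Combining the steps, the classification and accumulation run in $\sum_i O(|E(Z_i)|) + O(|E(N)|) = O(|E(N)|)$ time, which dominates the $\Theta(|E(N)|)$ cost of computing $\mathcal{Z}$; since merely loading $N$ requires $\Omega(|E(N)|)$ time, the overall complexity is $\Theta(|E(N)|)$.

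I do not anticipate a genuine obstacle, as this is a routine corollary of Theorem \ref{cor:count minimum support network} once $\mathcal{Z}$ is available. The only point requiring care is to reuse the same unit-cost arithmetic convention adopted in the proofs of Propositions \ref{prop:complexity.count.A} and \ref{prop:complexity of count minimal support network}, so that the exponentially large value $2^c$ is formed within the stated linear bound; under a bit-complexity model this term would instead dominate, but consistency with the preceding propositions makes the unit-cost reading the intended one.
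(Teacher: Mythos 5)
Your proposal is correct and follows essentially the same route the paper intends: compute the maximal zig-zag trail decomposition in $\Theta(|E(N)|)$ time via Proposition~\ref{prop:linear.zzt}, classify each trail in time linear in its size, evaluate the product formula of Theorem~\ref{cor:count minimum support network}, and use the $\Omega(|E(N)|)$ input-reading lower bound for tightness, exactly mirroring the proofs of Propositions~\ref{prop:complexity.count.A} and~\ref{prop:complexity of count minimal support network}. Your explicit remark about the unit-cost arithmetic convention is consistent with (and slightly more careful than) the paper's own treatment.
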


As noted in Section \ref{sec: all support network},  the elements of $\mathcal{C}_N$ can also be listed with $\Theta(|E(N)|)$ delay.

\subsection{Counting support networks in random phylogenetic networks}\label{sec:experiment.case.study}
To examine the typical behavior of support network counts in practice, 
we generated rooted binary phylogenetic networks with $n$ leaves and $r = 2(n - 1)$ reticulations, which simulates a moderately reticulated evolution. 
For each value of $n$ ranging from $3$ to $10$, we generated $100$ random networks that may or may not be tree-based. For each sampled network $N$, we computed the number of all support networks $|\mathcal{A}_N|$, the number of minimal support networks $|\mathcal{B}_N|$, and the number of minimum support networks $|\mathcal{C}_N|$ using our linear-time algorithms proposed in Sections \ref{sec: all support network}, \ref{sec:minimal support network}, and \ref{sec:minimum support network}. The minimum, maximum, and median values of each counts across the $100$ samples  are summarized in Table~\ref{tab:experiment 1}. The code for generating sample networks, the set of generated samples, and the counts for each sample are available in our GitHub repository.

Although these three quantities grow exponentially in the size of $N$, Table~\ref{tab:experiment 1} shows that
  $|\mathcal{B}_N|$  is consistently much smaller than  $|\mathcal{A}_N|$, and $|\mathcal{C}_N|$ is even smaller. Notably, the number of support networks varies widely even among networks with the same $n$ and $r$. In other words, some networks admit only a few, while others admit very many support networks. This highlights the diversity of phylogenetic networks with the same size.

\begin{table}[ht]
  \centering
  \begin{tabular}{crrrrrrrrr}
  \hline
  \multirow{2}{*}{$n$} & \multicolumn{3}{c}{$|\mathcal{A}_N|$} & \multicolumn{3}{c}{$|\mathcal{B}_N|$} & \multicolumn{3}{c}{$|\mathcal{C}_N|$} \\ \cline{2-10}
                     & Min & Max & Median & Min & Max & Median & Min & Max & Median \\ \hline
  3                  & 15  & 64  & 30     & 2   & 10  & 4      & 1   & 9   & 3      \\ \hline
  4                  & 55  & 336 & 160    & 4   & 28  & 9      & 1   & 18  & 4      \\ \hline
  5                  & 288 & 2,640& 900    & 8   & 72  & 24     & 1   & 36  & 8      \\ \hline
  6                  & 900& 15,840& 4,478   & 6   & 256 & 48     & 1   & 108 & 12     \\ \hline
  7                  & 6,435& 134,784& 24,720  & 24 & 768 & 142     & 2   & 192 & 24     \\ \hline
  8                  & 34,272& 571,914& 134,400 & 40  & 1,680& 270    & 2   & 576 & 38     \\ \hline
  9                  & 93,600& 4,186,080& 699,920 & 90 & 2,880& 576      & 2   & 1,152& 68     \\ \hline
  10                 & 449,280& 21,715,200& 3,861,936& 144 & 7,056& 1,372   & 4   & 1,728& 124    \\ \hline
  \end{tabular}
  \caption{The numbers of support networks of three types  for 100 randomly generated rooted binary phylogenetic networks $N$ with $n$ leaves and $r = 2(n-1)$ reticulations.}
  \label{tab:experiment 1}
  \end{table}

\section{Finding a support network with the fewest reticulations}\label{sec:minimum.reticulation}
As defined in Section \ref{sec:Problem}, the base tier  $r^\ast (N)$ of $N$ is the minimum value of $r(G)$ over all support networks $G$ of $N$. This leads us to Problem \ref{prob:minimum reticulation}.

\begin{problem}[\textsc{Reticulation Minimization}]
  \label{prob:minimum reticulation}
  Given a rooted binary phylogenetic network $N$, compute the base tier $r^\ast(N)$, and find a support network $G$ of $N$ with $r(G)=r^\ast(N)$.
\end{problem}

Since $r(G):=|E(G)| - |V(G)| + 1$ and  $G$ is a spanning subgraph of $N$, the support networks $G$ of $N$ minimize $r(G)$ if and only if it has the fewest edges among all support networks, i.e., it is a minimum support network. Therefore, an optimal solution of Problem \ref{prob:minimum reticulation} can be obtained by simply selecting an arbitrary element from $\mathcal{C}_N = \prod_{i=1}^d \mathcal{S}_{\mathcal{C}}(Z_i)$. This is done by computing the maximal zig-zag trail decomposition $\{Z_1, \dots, Z_d\}$ of $N$, and then selecting an arbitrary element of $\mathcal{S}_{\mathcal{C}}(Z_i)$ expressed in  \eqref{eq:sequence for minimum support networks} for each $Z_i$.

For example, the network $N$ in Figure~\ref{fig:zigzagtrail decomposition} is decomposed into the maximal zig-zag trails $Z_1$ through $Z_7$. According to \eqref{eq:sequence for minimum support networks}, the sequence $\langle 11 \rangle$ is $\mathcal{C}$-admissible for $E(Z_1)$, $E(Z_2)$, and $E(Z_5)$; $\langle 1010 \rangle$ is $\mathcal{C}$-admissible for $E(Z_3)$ and $E(Z_4)$; and $\langle 1 \rangle$ is $\mathcal{C}$-admissible for $E(Z_6)$ and $E(Z_7)$. Piecing these $\mathcal{C}$-admissible subsets together yields the edge-set of a support network $G^\ast$ of $N$ with the minimum reticulation number, that is, $r(G^\ast) = 1$.

\begin{theorem}
  \label{thm:linear-time.ret.min}
  The above algorithm solves Problem~\ref{prob:minimum reticulation} in $\Theta(|E(N)|)$ time.
\end{theorem}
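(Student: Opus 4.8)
The plan is to prove the two claims---correctness and running time---separately, both relying on the direct-product characterization $\mathcal{C}_N = \prod_{i=1}^{d} \mathcal{S}_{\mathcal{C}}(Z_i)$ supplied by Lemma~\ref{thm:bijection.ABC}.

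For correctness, I would first recall that every support network $G$ is a spanning subgraph of $N$, so $|V(G)| = |V(N)|$ is constant and $r(G) = |E(G)| - |V(N)| + 1$ is minimized exactly when $|E(G)|$ is minimized. Hence a support network attains $r^\ast(N)$ if and only if it is a minimum support network, i.e.\ an element of $\mathcal{C}_N$. It then suffices to verify that the algorithm's output $G^\ast$ lies in $\mathcal{C}_N$: by Lemma~\ref{thm:bijection.ABC} this holds precisely when $E(G^\ast) \cap E(Z_i)$ is $\mathcal{C}$-admissible for every $Z_i \in \mathcal{Z}$, which is guaranteed by construction since the algorithm selects each such restriction from $\mathcal{S}_{\mathcal{C}}(Z_i)$ as listed in \eqref{eq:sequence for minimum support networks}. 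Finally, the base tier itself is recovered as $r^\ast(N) = r(G^\ast) = |E(G^\ast)| - |V(N)| + 1$.

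For the running time, I would mirror the analysis of Proposition~\ref{prop:complexity.count.A}. By Proposition~\ref{prop:linear.zzt}, the decomposition $\mathcal{Z} = \{Z_1, \dots, Z_d\}$ is computed in $\Theta(|E(N)|)$ time, and this also determines, for each $Z_i$, whether it is a crown, an N-fence, or an M-fence or W-fence in $O(|E(Z_i)|)$ time. Given the type, emitting the corresponding $0$--$1$ pattern from \eqref{eq:sequence for minimum support networks} and evaluating $r(G^\ast)$ both take $O(|E(Z_i)|)$ time per trail. Since the trails partition the edge set, $\sum_{i=1}^{d} |E(Z_i)| = |E(N)|$, so the total selection cost is $O(|E(N)|)$. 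As loading $N$ already requires $\Omega(|E(N)|)$ time, the overall complexity is $\Theta(|E(N)|)$.

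The argument is essentially bookkeeping once the characterization of $\mathcal{C}_N$ is in hand; the only point requiring care is that classifying each maximal trail and selecting a valid $\mathcal{C}$-admissible pattern can genuinely be performed in time linear in $|E(Z_i)|$, with no hidden dependence on the global structure of $N$. This locality is what makes the per-trail work sum to $\Theta(|E(N)|)$ rather than something larger, and it is the one place where I would double-check that the cases in \eqref{eq:sequence for minimum support networks} are applied independently to each $Z_i$.
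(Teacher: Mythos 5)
Your proposal is correct and follows essentially the same route as the paper: the paper likewise reduces Problem~\ref{prob:minimum reticulation} to selecting an arbitrary element of $\mathcal{C}_N = \prod_{i=1}^d \mathcal{S}_{\mathcal{C}}(Z_i)$ via the observation that minimizing $r(G)$ over spanning subgraphs is equivalent to minimizing $|E(G)|$, and obtains the $\Theta(|E(N)|)$ bound from Proposition~\ref{prop:linear.zzt} together with per-trail selection from \eqref{eq:sequence for minimum support networks}. Your explicit appeal to Lemma~\ref{thm:bijection.ABC} for membership in $\mathcal{C}_N$ and the careful accounting of the per-trail costs summing over the edge partition are exactly the bookkeeping the paper leaves implicit.
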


When $N$ is tree-based, $r^\ast(N)=0$ holds and a support network $G$ with $r(G^\ast)=0$ is simply its support tree. In general, $r(G^\ast)$ equals the number of W-fences in the maximal trail decomposition of $N$.

\section{Finding a support network with the minimum level}\label{sec:minimum.level}
In Section \ref{sec:Problem}, we have also defined $\mathrm{level}^\ast(N)$, i.e., the base level of $N$.    
In contrast to Problem~\ref{prob:minimum reticulation}, we conjecture that Problem~\ref{prob:level opt} is NP-hard. Here, we provide exact and heuristic exponential algorithms for solving Problem~\ref{prob:level opt}. 

\begin{problem}[\textsc{Level Minimization}]
  \label{prob:level opt}
    Given a rooted binary phylogenetic network $N$, compute the base level, $\mathrm{level}^\ast(N)$, and find a support network $G$ with $\mathrm{level}(G)=\mathrm{level}^\ast(N)$.
 \end{problem}

\subsection{Exact algorithm}
An obvious exact algorithm for Problem \ref{prob:level opt}  computes $\mathrm{level}(G)$ of each element $G$ in $\mathcal{A}_N$ to obtain the minimum value, but this method is clearly impractical. Algorithm \ref{alg:ExactAlgorithm} performs an exhaustive search in $\mathcal{B}_N$ instead of $\mathcal{A}_N$. Recalling the numerical results in Table \ref{tab:experiment 1}, we know that this search space reduction is significant. We can prove that $\mathcal{B}_N$ contains a correct $G^\ast$,  yielding  
 Theorem \ref{thm:alg1}. The proof  is given in Appendix.

\begin{algorithm}
  \caption{Exact method for solving \textsc{Level Minimization} (Problem \ref{prob:level opt})}
  \label{alg:ExactAlgorithm}
  \KwIn{A rooted binary phylogenetic network $N$}
  \KwOut{A support network $G^\ast$ of $N$ with the minimum level, the value of $\mathrm{level}^\ast(N)$}
  \ForEach{$G \in \mathcal{B}_N$}{
    compute $\mathrm{level}(G)$ 
  }
  let $G^\ast$ be a support network $G$ with the minimum $\mathrm{level}(G)$ in $\mathcal{B}_N$\;
  \Return $G^\ast$, and $\mathrm{level}(G^\ast)$ as $\mathrm{level}^\ast(N)$
\end{algorithm}

\begin{theorem}\label{thm:alg1}
Algorithm \ref{alg:ExactAlgorithm} returns a correct solution of Problem \ref{prob:level opt}. It runs  in $O(|\mathcal{B}_N|\cdot |E(N)|) = O(\psi^{|E(N)|}\cdot |E(N)|)$ time, where $\psi$ is as in Theorem~\ref{thm:counting by padovan, perrin}. 
\end{theorem}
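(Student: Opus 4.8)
The plan is to split the statement into a correctness claim and a running-time claim, and to reduce correctness to a single monotonicity lemma. Since $\mathcal{B}_N \subseteq \mathcal{A}_N$ by Definition~\ref{def:ABC}, every minimal support network is in particular a support network, so the minimum of $\mathrm{level}(\cdot)$ over $\mathcal{B}_N$ is at least $\mathrm{level}^\ast(N)$. Hence the entire correctness claim reduces to exhibiting one minimal support network whose level equals $\mathrm{level}^\ast(N)$: if such a $G'$ exists, the exhaustive minimisation over $\mathcal{B}_N$ in Algorithm~\ref{alg:ExactAlgorithm} is forced to return an optimum.

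To produce such a $G'$, I would fix any optimal support network $G \in \mathcal{A}_N$, i.e.\ one with $\mathrm{level}(G) = \mathrm{level}^\ast(N)$, which exists by definition of the base level. I would then delete edges from $G$ one at a time while preserving the support-network property (equivalently, keeping each $E(G)\cap E(Z_i)$ inside $\mathcal{S}_{\mathcal{A}}(Z_i)$) until no further edge can be removed; by Lemma~\ref{thm:bijection.ABC} this yields a minimal support network $G' \in \mathcal{B}_N$ with $E(G') \subseteq E(G)$. Granting the monotonicity lemma below, we then get $\mathrm{level}(G') \le \mathrm{level}(G) = \mathrm{level}^\ast(N) \le \mathrm{level}(G')$, where the last inequality uses $G' \in \mathcal{A}_N$; thus $G'$ attains the base level and lies in $\mathcal{B}_N$.

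The heart of the argument, and the step I expect to be the main obstacle, is the \emph{monotonicity lemma}: for support networks $G_1, G_2$ of $N$ with $E(G_1) \subseteq E(G_2)$, one has $\mathrm{level}(G_1) \le \mathrm{level}(G_2)$. I would prove it through two observations about how blocks and reticulations behave under edge deletion. First, since two-edge-connectivity is preserved under adding edges, any two edges lying in a common block of $G_1$ lie on a common cycle that survives in $G_2$, so every block of $G_1$ is a connected subgraph of some block of $G_2$. Second, because $G_1, G_2$ are spanning subgraphs of the binary network $N$, every vertex has in-degree at most two, so the reticulations of a support network are exactly its in-degree-two vertices, and in each nontrivial block the number of such vertices equals the cycle rank $|E(B)|-|V(B)|+1$ of that block (each reticulation's two incoming edges lie on a common cycle and hence inside the same block). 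As the cycle rank of a connected subgraph never exceeds that of the ambient connected graph, combining the two observations gives, for a level-attaining block $B_1$ of $G_1$ contained in a block $B_2$ of $G_2$, the chain $\mathrm{level}(G_1) = r(B_1) \le r(B_2) \le \mathrm{level}(G_2)$. The delicate points are the precise bookkeeping that assigns each reticulation to a unique nontrivial block and the verification that the reticulation count of a block coincides with its cycle rank under the paper's cut-edge definition of a block.

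For the running time I would bound the two costs separately. Enumerating $\mathcal{B}_N$ can be done with $\Theta(|E(N)|)$ delay using the product characterization $\mathcal{B}_N = \prod_{i=1}^d \mathcal{S}_{\mathcal{B}}(Z_i)$, as noted in Section~\ref{sec:minimal support network}; for each enumerated $G$, computing $\mathrm{level}(G)$ amounts to one block decomposition of a graph on $|V(N)|$ vertices and at most $|E(N)|$ edges, followed by a maximisation of $|E(B)|-|V(B)|+1$ over its blocks, all doable in $O(|E(N)|)$ time by a depth-first traversal. Multiplying the per-element cost by the number of elements gives the $O(|\mathcal{B}_N|\cdot|E(N)|)$ bound, and substituting $|\mathcal{B}_N| = \Theta(\psi^{|E(N)|})$ from Theorem~\ref{thm:counting by padovan, perrin} yields the stated $O(\psi^{|E(N)|}\cdot|E(N)|)$ bound.
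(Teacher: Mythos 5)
Your proposal is correct and follows essentially the same route as the paper: the paper also reduces correctness to the fact that any optimal support network contains a minimal support network of no larger level (it phrases this as a one-line contradiction, asserting that ``each block of $G$ is a subgraph of its corresponding block of $G^\prime$'', which is exactly your monotonicity lemma), and its running-time analysis via per-element block decomposition and cycle-rank computation matches yours. The only difference is presentational: you argue directly rather than by contradiction and spell out the block/cycle-rank bookkeeping that the paper leaves implicit.
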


\subsection{Heuristic algorithm}

%

Algorithm~\ref{alg:HeuristicAlgorithm} performs an exhaustive search in an even smaller search space $\mathcal{C}_N$. It repeats the same procedure as Algorithm~\ref{alg:ExactAlgorithm}, except its iteration number is reduced from $|\mathcal{B}_N|$ to  $|\mathcal{C}_N|$. 
  By Theorem \ref{thm:alg1}, Algorithm~\ref{alg:HeuristicAlgorithm} therefore runs in $O(|\mathcal{C}_N|\cdot |E(N)|)$ time.
  
  Algorithm~\ref{alg:HeuristicAlgorithm} does not alway output a correct solution of Problem \ref{prob:level opt}, because minimizing the number of reticulations does not imply minimizing the level of support networks. Consider a level-$1$-based network $N$ in Figure \ref{fig:heuristic failure}(a), which has the property that any $G \in \mathcal{C}_N$ satisfies $r(G)=2$ and $\mathrm{level}(G)=2$. Then, Algorithm \ref{alg:HeuristicAlgorithm}  outputs a level-$2$ support network 
  as illustrated in Figure \ref{fig:heuristic failure}(b). An optimal support tree $G^\ast$ with $\mathrm{level}(G^\ast)=1$ exists in $\mathcal{B}_N\setminus \mathcal{C}_N$ while it is not optimal in terms of reticulation numbers, as shown in Figure \ref{fig:heuristic failure}(c). 

\begin{algorithm}
  \caption{Heuristic method for solving \textsc{Level Minimization} (Problem \ref{prob:level opt})}
  \label{alg:HeuristicAlgorithm}
  \KwIn{A rooted binary phylogenetic network $N$}
  \KwOut{A support network $G^\ast$ of $N$ with the minimum level, the value of $\mathrm{level}^\ast (N)$}
  \ForEach{$G \in \mathcal{C}_N$}{
    compute $\mathrm{level}(G)$ 
  }
  let $G^\ast$ be a support network $G$ with the minimum $\mathrm{level}(G)$ in $\mathcal{C}_N$\;
  \Return $G^\ast$, and $\mathrm{level}(G^\ast)$ as $\mathrm{level}^\ast(N)$
\end{algorithm}

\begin{figure}[htbp]
  \centering
  \includegraphics[width=0.8\textwidth]{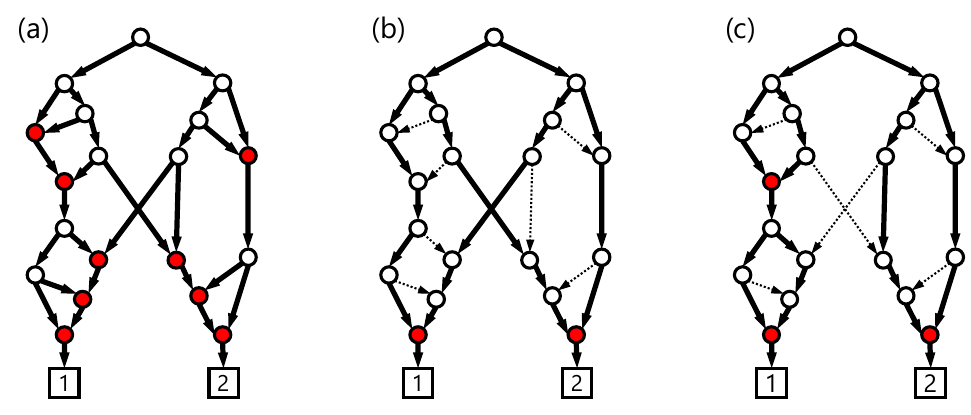}  
  \caption{(a) An instance of Problem \ref{prob:level opt} for which Algorithm \ref{alg:HeuristicAlgorithm} cannot find a correct solution. (b) An example of a support network output by Algorithm \ref{alg:HeuristicAlgorithm} with   
  two reticulations (red) and level $2$. 
  (c) An optimal support network 
  that has three reticulations (red) yet achieves level $1$.  
  \label{fig:heuristic failure}}
\end{figure}

\subsection{Performance evaluation of the exact and heuristic algorithms}

To evaluate the performance of the proposed algorithms, we conducted experiments on randomly generated rooted binary phylogenetic networks with $n = 8$ leaves and reticulation numbers $r$ ranging from $1$ to $43$. For each $r$, we generated 100 random networks. On each instance, we recorded the execution times of Algorithms~\ref{alg:ExactAlgorithm} and~\ref{alg:HeuristicAlgorithm}, and checked whether the two algorithms produced the same value of $\mathrm{level}^\ast(N)$. All experiments were performed on a Windows laptop equipped with an Intel Core i9-12900H (2.50 GHz) CPU and 32~GB of memory. A summary of the results is shown in Figure~\ref{fig:experiment results}.

The results confirm that the runtime of both algorithms increases exponentially with the reticulation number $r$. Algorithm~\ref{alg:ExactAlgorithm} was executed up to $r = 34$, but was terminated beyond this point due to insufficient memory.
Algorithm~\ref{alg:HeuristicAlgorithm} was executed up to $r = 43$ for the same reason, with its runtime consistently between $1/10$ and $1/100$ of that of Algorithm~\ref{alg:ExactAlgorithm}. 

Algorithm~\ref{alg:HeuristicAlgorithm} produced the same output as Algorithm~\ref{alg:ExactAlgorithm} in approximately 70\% of instances, even near the execution limit of Algorithm~\ref{alg:ExactAlgorithm} (around $r = 30$). In practice, we observed that Algorithm~\ref{alg:HeuristicAlgorithm} rarely produced solutions that were substantially worse than optimal, and generally provided good approximations. The full set of results, including the output values of both algorithms for all samples, is available in our GitHub repository.

\begin{figure}[htbp]
  \centering
  \includegraphics[width=0.95\textwidth]{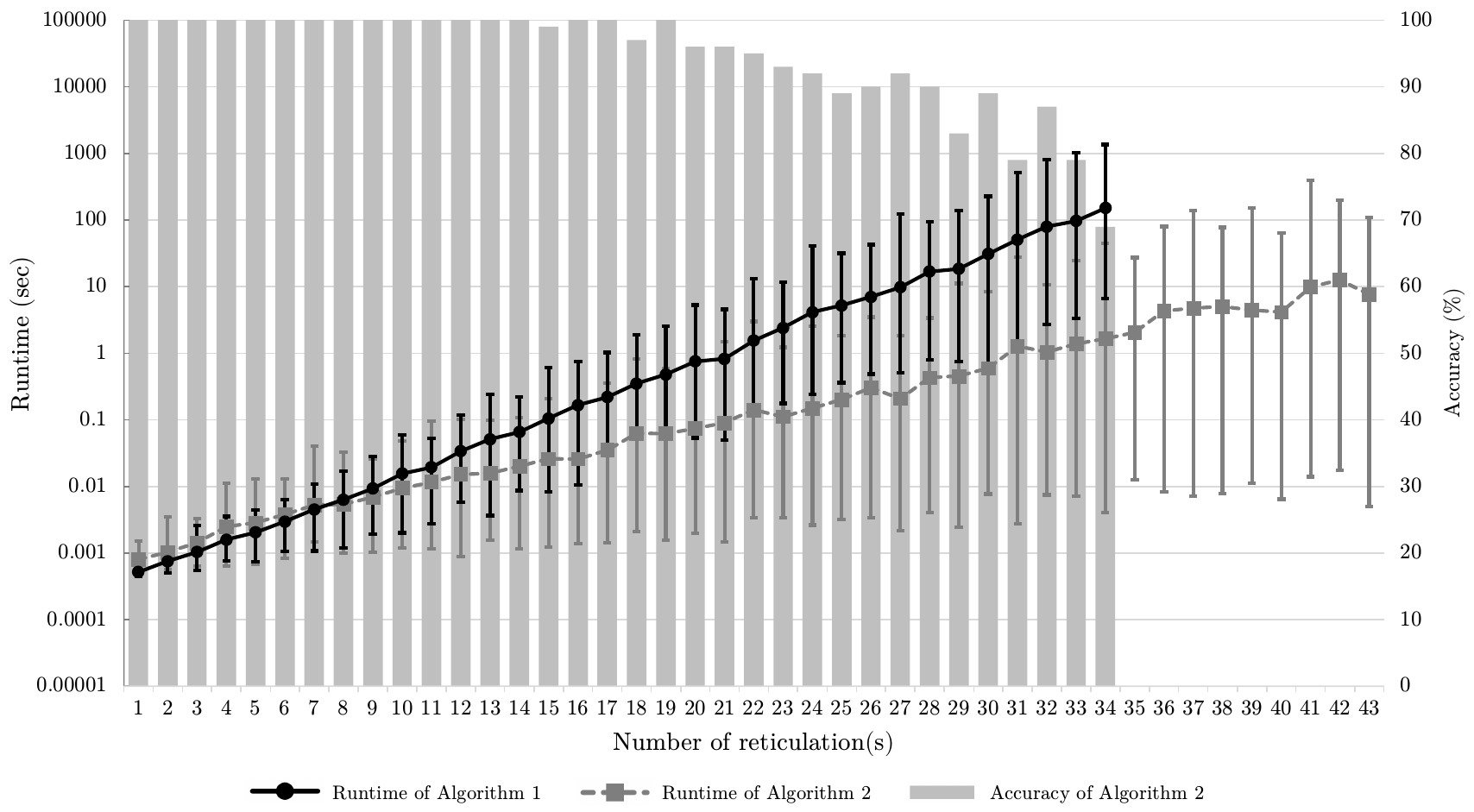}  
  \caption{
Performance of Algorithms~\ref{alg:ExactAlgorithm} and~\ref{alg:HeuristicAlgorithm} for phylogenetic networks $N$ with $n = 8$ leaves and $1 \leq r \leq 43$ reticulations. 
Runtimes (left axis, log scale) show the average over 100 samples, with error bars indicating the minimum and maximum. 
The gray bars and right axis show the percentage of instances in which the output of Algorithm~\ref{alg:HeuristicAlgorithm} exactly matched that of Algorithm~\ref{alg:ExactAlgorithm}.
 \label{fig:experiment results}}
\end{figure}

\section{Conclusion and future directions}\label{sec:conclusion}
In this paper, we have extended Hayamizu's structure theorem for rooted binary phylogenetic networks to develop a theoretical foundation for support networks. The extension from support trees allows us to analyze not only tree-based networks but also general, non-tree-based rooted phylogenetic networks. Our main contributions are threefold:

First, we established direct-product characterizations of three families of support networks for a given network $N$: all support networks $\mathcal{A}_N$, minimal support networks $\mathcal{B}_N$, and minimum support networks $\mathcal{C}_N$. These characterizations yielded closed-form counting formulas that can be computed in linear time, revealing interesting connections to well-known integer sequences such as Fibonacci, Lucas, Padovan, and Perrin numbers. We also described and implemented a linear-delay algorithm for listing the support networks in each family. 

Second, we developed a linear-time algorithm for  \textsc{Reticulation Minimization}, which finds a support network with the minimum reticulation number (minimum tier). We proved that an optimal solution can be found simply by picking any element from the set $\mathcal{C}_N$.

Third, we proposed both exact and heuristic algorithms for \textsc{Level Minimization}. While both algorithms have exponential time complexity, our experimental evaluations demonstrated their practicality across a wide range of reticulation levels. Notably, the heuristic method found optimal solutions in most cases, and when it did not, it still produced good approximations. 

There are some promising avenues for future research. Although we have discussed binary networks in this paper, we note that all results presented here apply to any non-binary network $N$ such that each vertex $v$ satisfies $\text{indeg}_N(v) \leq 2$ and $\text{outdeg}_N(v) \leq 2$. Such networks are called ``almost-binary''  \cite{SuzukiEtAl--BridgingDeviationIndices}. However, extending this framework to fully non-binary networks is challenging because the uniqueness of the maximal zig-zag trail decomposition of $N$, a key component of Hayamizu's structure theorem that underpins our work, is not guaranteed beyond the almost-binary case. It is also important to explore whether support networks with the minimum tier or level can gain meaningful insights into reticulate evolutionary processes from biological data. As our heuristic algorithm empirically produces good approximate solutions, theoretical analysis of its approximation ratio and the design of improved approximation algorithms also remain valuable pursuits.

\bibliography{takatora-counting.bib} 

\appendix

\section{Appendix}
In what follows, $N$ is a rooted binary phylogenetic network, $\mathcal{Z}=\{Z_1, \dots, Z_d\}$ is  the maximal zig-zag trail decomposition of $N$, and $\vec{\mathcal{Z}} = (Z_1, \dots, Z_d)$ is any  ordered set of the elements of $\mathcal{Z}$.  
In addition, for each $Z_i\in \mathcal{Z}$,   $m_i := |E(Z_i)|$ and we write $Z_i= (e_1, \dots, e_{m_i})$. 
\subsection{Proof of Theorem \ref{thm:fibonacci.lucas}}\label{apd.proof:fibonacci.lucas}

    We prove that $|\mathcal{S}_\mathcal{A}(Z_i)| = F_{m_i}$ holds if $Z_i$ is a fence and that $|\mathcal{S}_\mathcal{A}(Z_i)| = L_{m_i}$ holds if $Z_i$ is a crown.
    Consider a fence  $Z_i = (e_1, \dots, e_{m_i})$. As the case of $m_i\leq 2$ is trivial, we may assume $m_i\geq 3$.  
   Let $Z_i^\prime = (e_1^\prime,\dots, e_{m_i}^\prime)$ be the undirected graph obtained from $Z_i$ by ignoring all edge orientations, and let  $Z_i^{\prime\prime}  = (e_2^\prime,\dots, e_{m_i-1}^\prime)$ be its subgraph induced by the non-terminal edges of $Z_i^\prime$. 
   Since $N$ is binary, the fence $Z_i$ visits each internal vertex exactly once, so $Z_i^{\prime\prime}$ is a path (note that if we consider  almost-binary $N$ mentioned in Section \ref{sec:conclusion}, $Z_i^{\prime\prime}$ may not be a path; however, without loss of generality we may still assume that $Z_i^{\prime\prime}$ is a path because this does not affect $\mathcal{S}_\mathcal{A}(Z_i)$). 
    Let $S$ be a subset of $E(Z_i)$ and let $\overline{S} := E(Z_i)\setminus S$. 
   By  Lemma \ref{thm:bijection.ABC} and \eqref{eq:sequence for support network}, $S$ is $\mathcal{A}$-admissible if and only if 
    $e_1, e_2\notin \overline{S}$  and for any $k\in [1, m_i - 1]$,  $\{e_k, e_{k+1}\} \not \subseteq \overline{S}$.  
       Then,  there exists a bijection between $\mathcal{S}_{\mathcal{A}} (Z_i)$ and the family of matchings in the path $Z_i^{\prime\prime}$.  
    Theorem 1 in~\cite{farrell1986occurrences} says that the number of (possibly empty) matchings in a path with $\ell \geq 1$ edges equals $F_{\ell+2}$. Since $|E(Z_i^{\prime\prime})|=m_i -2$, the number of  matchings  in $Z_i^{\prime\prime}$ equals $F_{m_i}$.
    Hence, $|\mathcal{S}_\mathcal{A}(Z_i)| = F_{m_i}$ holds.

Consider a crown  $Z_i = (e_1, \dots, e_{m_i})$. Since a crown has no terminal edges, $S\subseteq E(Z_i)$  is $\mathcal{A}$-admissible if and only if for any $k\in [1, m_i - 1]$,  $\{e_k, e_{k+1}\} \not \subseteq \overline{S}$. 
Let $Z_i^\prime$ be the undirected cycle obtained from $Z_i$ by ignoring all edge orientations. Similarly to above,  there is a bijection between $\mathcal{S}_{\mathcal{A}} (Z_i)$ and the family of matchings in  the cycle $Z_i^\prime$. Theorem 2 in~\cite{farrell1986occurrences} says that the number of (possibly empty) matchings in a cycle with $\ell\geq 3$ edges equals $L_\ell$.  Hence, $|\mathcal{S}_\mathcal{A}(Z_i)| = L_{m_i}$ holds. 

Theorems 5.6 and 5.8 in \cite{koshy2019fibonacci} say that $F_n$ and $L_n$ are expressed by the closed-forms $F_n = (\phi^n -(-\phi)^{-n})/\sqrt{5}$ and $L_n = \phi^n + (1-\phi)^n$, respectively. Then, $F_n = \Theta(\phi^n)$ and $L_n = \Theta(\phi^n)$ hold. This implies $\Theta(|\mathcal{A}_N|) = \Theta(\phi^{|E(Z_1)|}\times \dots \times \phi^{|E(Z_d)|}) = \Theta(\phi^{|E(N)|})$ holds. 
    This completes the proof.

      \subsection{Proof of Theorem \ref{thm:counting by padovan, perrin}}\label{apd.proof.B.padovan.perrin}
    We prove that $|\mathcal{S}_\mathcal{B}(Z_i)| = P_{m_i}$ holds if $Z_i$ is a fence and $|\mathcal{S}_\mathcal{B}(Z_i)| = Q_{m_i}$ holds if $Z_i$ is a crown.
    Consider a fence $Z_i = (e_1, \dots, e_{m_i})$. As the case of $m_i\leq 2$ is trivial, we may assume $m_i\geq 3$.   Let $Z_i^{\prime\prime}$ be the undirected path as in the proof of Theorem \ref{thm:fibonacci.lucas}.  
    An $\mathcal{A}$-admissible subset $S$ of $E(Z_i)$ is minimal  (namely, $\mathcal{B}$-admissible) if and only if a matching in $Z_i^{\prime\prime}$ corresponding to $\overline{S}:=E(Z_i)\setminus S$ is maximal. 
   Then, there exists a bijection between $\mathcal{S}_{\mathcal{B}}(Z_i)$ and the family of maximal matchings in $Z_i^{\prime\prime}$. 
   Proposition 3.1 in \cite{padovan-perrin} say that the number of maximal matchings in a path with $\ell$ edges equals $P_{\ell+2}$. 
   Since $|E(Z_i^{\prime\prime})|=m_i -2$, the number of maximal matchings  in $Z_i^{\prime\prime}$ equals $P_{m_i}$.
    Hence, $|\mathcal{S}_\mathcal{B}(Z_i)| = P_{m_i}$ holds. 
   

    Consider a crown $Z_i = (e_1, \dots, e_{m_i})$. Let $Z_i^\prime$ be the undirected cycle as in the proof of Theorem \ref{thm:fibonacci.lucas}. Then,  
    there exists a bijection between $\mathcal{S}_{\mathcal{B}}$ and the family of maximal matchings in the cycle $Z_i^\prime$. 
    By the statement after Proposition 3.9 in \cite{padovan-perrin},  the number of maximal matchings in a cycle with $\ell\geq 3$ edges equals $Q_\ell$. 
    Thus, $|\mathcal{S}_\mathcal{B}(Z_i)| = Q_{m_i}$. 
    
    By \cite{padovan-perrin}, $P_n = \Theta(\psi^n)$ and $Q_n = \Theta(\psi^n)$ hold.  Hence, $\Theta(|\mathcal{B}_N|) = \Theta(\psi^{|E(Z_1)|}\times \dots \times \psi^{|E(Z_d)|}) = \Theta(\psi^{|E(N)|})$ holds. This completes the proof.


\subsection{Proof of Theorem \ref{thm:alg1}}
\label{apd:exact correctness}
%
As Algorithm \ref{alg:ExactAlgorithm} checks the level of each support network in $\mathcal{B}_N$, our goal is to show that $\mathcal{B}_N$ contains a level-$k$ support network of $N$. To obtain a contradiction, assume that $\mathrm{level}(G) > k$ holds for any $G \in \mathcal{B}_N$. This implies that any level-$k$ support network $G^\prime$ of $N$ is in $\mathcal{A}_N \setminus \mathcal{B}_N$. 
Since $G^\prime$ is not minimal, there exists a minimal one $G\in\mathcal{B}_N$ with $G \subsetneq G^\prime$. 
Then,  $\mathrm{level}(G) \leq \mathrm{level}(G^\prime) = k$ since each block of $G$ is a subgraph of its corresponding block of $G^\prime$, which is a contradiction. This completes the proof of the correctness of Algorithm \ref{alg:ExactAlgorithm}. 

Since $N$ is a binary network, we have $O(|V(N)|) = O(|E(N)|)$. Also, as any minimal support network $G \in \mathcal{B}_N$ is a subgraph of $N$, we have $O(|E(G)|) = O(|E(N)|)$.  
For each $G \in \mathcal{B}_N$, it takes $O(|V(G)| + |E(G)|) = O(|E(N)|)$ time to compute  its block decomposition $\{G_1, \dots, G_k\}$ using depth-first search \cite{HopcroftTarjan-1973-Algorithm447Efficient}.   
  For any block $G_i$,  one can compute  $r_i = |E(G_i)|-|V(G_i)|+1$ in $O(|E(G_i)|)$ time.  Then, for each $G \in \mathcal{B}_N$, it takes $O(|E(G_1)|+\dots +|E(G_k)|)= O(|E(G)|)=O(|E(N)|)$ time to calculate  $\mathrm{level}(G)$, namely the maximum  $r_i$ across all blocks $G_i$ of $G$. Overall,  it takes $O(|\mathcal{B}_N|\cdot |E(N)|)$ time to obtain the minimum value of $\mathrm{level}(G)$ across all $G\in \mathcal{B}_N$. By Theorem~\ref{thm:counting by padovan, perrin}, $O(|\mathcal{B}_N|\cdot |E(N)|) = O(\psi^{|E(N)|}\cdot |E(N)|)$ holds.



\end{document}